\let\oldsqrt\sqrt
\def\sqrt{\mathpalette\DHLhksqrt}
\def\DHLhksqrt#1#2{%
\setbox0=\hbox{$#1\oldsqrt{#2\,}$}\dimen0=\ht0
\advance\dimen0-0.2\ht0
\setbox2=\hbox{\vrule height\ht0 depth -\dimen0}%
{\box0\lower0.4pt\box2}}
\newcommand{\R}{\mathbb{R}} 
\newcommand{\dist}{\textnormal{dist}} 
\newcommand{\diam}{\textnormal{diam}} 
\newcommand{\supp}{\textnormal{supp}} 
\newcommand{\essinf}{\textnormal{essinf}} 
\renewcommand{\phi}{\varphi}
\newcommand{\cW}{{\mathcal W}}
\newcommand{\rn}{\R^N}
\theoremstyle{definition}
\newtheorem{defi}{Definition}[section]
\newtheorem{remark}[defi]{Remark}
\theoremstyle{plain} 
\newtheorem{thm}[defi]{Theorem}
\newtheorem{lemma}[defi]{Lemma}
\newtheorem{cor}[defi]{Corollary}
\theoremstyle{definition}
\numberwithin{equation}{section}
 \title{Strong comparison principle for the fractional $p$-Laplacian and applications to starshaped rings}
\author{
	\ Sven Jarohs\footnote{Goethe-Universit\"at Frankfurt, Germany, jarohs@math.uni-frankfurt.de.}
}
\date{\today}
\begin{document}
\maketitle
\begin{abstract}
	In the following we show the strong comparison principle for the fractional $p$-Laplacian, i.e. we analyze 
	\[
	(P)\quad\left\{\begin{aligned}
		(-\Delta)^s_pv+q(x)|v|^{p-2}v&\geq 0&&\text{ in $D$}\\
		(-\Delta)^s_pw+q(x)|w|^{p-2}w&\leq 0&&\text{ in $D$}\\
		v&\geq w&&\text{in $\R^N$}
	\end{aligned}\right.
	\]
	where $s\in(0,1)$, $p>1$, $D\subset \R^N$ is an open set, and $q\in L^{\infty}(\R^N)$ is a nonnegative function. Under suitable conditions on $s,p$ and some regularity assumptions on $v,w$ we show that either $v\equiv w$ in $\R^N$ or $v>w$ in $D$. Moreover, we apply this result to analyze the geometry of nonnegative solutions in starshaped rings and in the half space.
\end{abstract}

{\footnotesize
\begin{center}
\textit{Keywords.} fractional $p$-Laplacian $\cdot$ strong comparison principle $\cdot$ starshaped superlevel sets
\end{center}
}

\section{Introduction}

In the following we investigate an ordered pair of functions $v,w:\R^N\to \R$ which are sub- and supersolution of the equation
\begin{equation}\label{main-prob2}
(-\Delta)^s_p u +q(x)|u|^{p-2}u=g \qquad \text{ in $D$,}
\end{equation}
where $s\in(0,1)$, $p>1$, $q\in L^{\infty}(D)$ is a nonnegative function, $g\in L^{p'}(D)$ with $p'=\frac{p}{p-1}$ the conjugate of $p$, and $(-\Delta)^s_p$ is the $s$-fractional $p$-Laplacian (up to a constant). Recall that for suitable $(s,p)$ and some smoothness conditions on $u$ we may write (see \cite[Proposition 2.12]{IMS14})
\[
 (-\Delta)^s_p u(x)=\lim_{\epsilon\to 0^+}\int_{\R^N\setminus B_{\epsilon}(x)}\frac{|u(x)-u(y)|^{p-2}(u(x)-u(y))}{|x-y|^{N+sp}}\ dy,\;\; x\in \R^N.
\]
In order to derive a strong comparison principle for the fractional $p$-Laplacian we use a weak setting. We denote by $W^{s,p}(\R^N)$ as usual the fractional Sobolev space of order $(s,p)$ given by
\begin{equation}\label{wsp-rn}
W^{s,p}(\R^N)=\Bigg\{u\in L^{p}(\R^N)\;:\; \int_{\R^N}\int_{\R^N}\frac{|u(x)-u(y)|^p}{|x-y|^{N+sp}}\ dxdy<\infty\Bigg\}
\end{equation}
and for an open set $D\subset \R^N$ we denote 
\begin{equation}\label{wsp-d}
\cW^{s,p}_0(D):=\{u\in W^{s,p}(\R^N)\;:\; u\equiv 0 \text{ on $\R^{N}\setminus D$}\}.
\end{equation}
For an introduction into fractional Sobolev spaces we refer to \cite{NPV11}. Finally, we also use the space 
\begin{equation}\label{wsp-gen}
\tilde{W}^{s,p}(D):=\{u\in L^{p}_{loc}(\R^N)\;:\; \int_{D}\int_{\R^N} \frac{|u(x)-u(y)|^{p}}{|x-y|^{N+sp}}\ dxdy<\infty\}
\end{equation}
to admit function with a certain growth at infinity. Given an open set $D\subset \R^N$, $q\in L^{\infty}(D)$, a function $v\in \tilde{W}^{s,p}(D)$ is called a supersolution of
\eqref{main-prob2}
if for all nonnegative $\varphi\in \cW_0^{s,p}(D)$ with compact support in $\R^N$ we have
\begin{equation}\label{weak-formulation}
 \int_{\R^{N}}\int_{\R^{N}}\frac{|v(x)-v(y)|^{p-2}(v(x)-v(y))(\varphi(x)-\varphi(y))}{|x-y|^{N+sp}}\ dxdy + \int_{D} q|v|^{p-2}v\varphi\ dx \geq \int_Dg\phi\ dx.
\end{equation}
Similarly we call $v$ a subsolution of \eqref{main-prob2}, if $-v$ is a supersolution of \eqref{main-prob2}. If $v$ is a sub- and a supersolution of \eqref{main-prob2} \underline{and} $v\in \cW^{s,p}_0(D)$, then we call $v$ a solution of \eqref{main-prob2}. We note that indeed the left-hand side in \eqref{weak-formulation} is well-defined as is shown in Lemma \ref{well-defined} below.\\

Equations involving the fractional Laplacian, that is the case of $p=2$, have been studied extensively in recent years (see e.g. \cite{BV16} and the references in there), whilst for its nonlinear counterpart there are still several unanswered questions. Existence of solutions and their regularity has been treated in \cite{DKP16,KKP16a,IMS13,IMS14}. In particular, the question of existence of nontrivial solutions to problem \eqref{main-prob2} in the case $q=0$ with nontrivial outside data has been studied in \cite{DKP16,KKP16a}. Let us also mention \cite{LL14}, where the Rayleigh quotient associated to $(-\Delta)^s_p$ has been studied and \cite{KKP16b}, which analyzes the obstacle problem associated with the fractional $p$-Laplacian. In this work we prove a strong comparison principles for equations of type \eqref{main-prob2} and apply this to equations in starshaped rings and in the half space.

\begin{thm}[Strong comparison principle]\label{smp}
	Let $s\in(0,1)$, $p>1$, $D\subset \R^N$ be an open set, $q\in L^{\infty}(D)$, $q\geq0$, $g\in L^{p'}(D)$, where $p'=\frac{p}{p-1}$, and let $v,w\in \tilde{W}^{s,p}(D)$ be such that $v$ is a supersolution and $w$ a subsolution of \eqref{main-prob2} with $v\geq w$. If one of the following holds
   \begin{enumerate}
   	\item $\frac{1}{1-s}<p\leq2$ and $v\in L^{\infty}(\R^N)$ or $w\in L^{\infty}(\R^N)$, or
   	\item $p\geq 2$ and for some $\alpha\in(0,1]$ with $\alpha(p-2)>sp-1$ we have $v\in C_{loc}^{\alpha}(D)\cap L^{\infty}(\R^N)$ or $w\in C^{\alpha}_{loc}(D)\cap L^{\infty}(\R^N)$,
   \end{enumerate}
	then either $v=w$ a.e. in $\R^N$ or 
	\[
	\underset{K}\essinf\ (v-w)>0 \quad\text{ for all $K\subset\subset D$.}
	\]
\end{thm}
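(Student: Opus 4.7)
The plan is to linearise the difference of the two weak inequalities into a linear nonlocal problem for $z := v - w$ with a measurable symmetric kernel, and then invoke a strong minimum principle for nonnegative weak supersolutions of such operators.

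First I would subtract the weak inequality defining $v$ as supersolution from that defining $w$ as subsolution, testing with an arbitrary nonnegative $\varphi \in \cW^{s,p}_0(D)$ of compact support. Writing $\Psi(t) = |t|^{p-2}t$, the zero-order contribution $\int_D q(\Psi(v)-\Psi(w))\varphi\,dx$ is nonnegative (since $\Psi$ is nondecreasing, $v \geq w$, $q \geq 0$ and $\varphi \geq 0$) and can be discarded. Using the identity $\Psi(a) - \Psi(b) = (p-1)(a-b)\int_0^1 |\tau a + (1-\tau) b|^{p-2}\,d\tau$ with $a = v(x)-v(y)$ and $b = w(x)-w(y)$, the nonlocal part rewrites as
\[
\int_{\R^N}\int_{\R^N} (z(x)-z(y))(\varphi(x)-\varphi(y))\,\kappa(x,y)\,dx\,dy \geq 0,
\]
with the symmetric nonnegative kernel
\[
\kappa(x,y) = \frac{p-1}{|x-y|^{N+sp}}\int_0^1 |\tau(v(x)-v(y))+(1-\tau)(w(x)-w(y))|^{p-2}\,d\tau.
\]
Thus $z \geq 0$ is a weak supersolution of a \emph{linear} nonlocal operator with measurable kernel; the legitimacy of these manipulations for $v,w \in \tilde{W}^{s,p}(D)$ follows from Lemma~\ref{well-defined}.

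The core of the argument is to control $\kappa$ on compact subsets of $D\times D$ so that the linearised operator fits the framework of a strong minimum principle for linear nonlocal equations with measurable coefficients. In Case~1 ($p \leq 2$), $p-2 \leq 0$ combined with the $L^\infty$ bound on $v$ or $w$ yields $|\tau a+(1-\tau)b| \leq C$ and hence $\kappa(x,y) \gtrsim |x-y|^{-N-sp}$; the operator is therefore of nondegenerate fractional type of order $sp$, and the auxiliary hypothesis $p > 1/(1-s)$, equivalently $sp < p-1 < 1$, enters when constructing compactly supported test functions in the appropriate space (e.g.\ indicator-type cutoffs belong to $W^{s,p}$ precisely when $sp < 1$). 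In Case~2 ($p \geq 2$), the kernel $\kappa$ can vanish along the diagonal, and the $C^\alpha_{loc}(D)$ bound gives $|v(x)-v(y)|,|w(x)-w(y)| \lesssim |x-y|^\alpha$, so $\kappa(x,y) \lesssim |x-y|^{\alpha(p-2)-N-sp} = |x-y|^{-N-\sigma}$ with effective fractional order $\sigma := sp - \alpha(p-2)$; the hypothesis $\alpha(p-2) > sp - 1$ is exactly what enforces $\sigma < 1$, keeping the linearised operator within the valid fractional range.

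Once the kernel is under control, I would apply a strong minimum principle (or weak Harnack inequality) for nonnegative weak supersolutions of such linear nonlocal operators to deduce the dichotomy: either $z \equiv 0$ a.e., or $\essinf_K z > 0$ for every $K \subset\subset D$. The main obstacle is this last step combined with Case~2, namely reconciling the possible degeneracy of $\kappa$ along the diagonal with a linear strong minimum principle. Here the sharp balance $\alpha(p-2) > sp - 1$ is crucial because it simultaneously guarantees that the linearised bilinear form is well defined on a suitable class of test functions and that enough singularity of the kernel survives to propagate positivity of $z$ from any region where it does not vanish into all of $D$.
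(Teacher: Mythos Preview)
Your approach differs substantially from the paper's, and as written it has a genuine gap.

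The paper does \emph{not} linearise. It argues directly with the nonlinear operator via a barrier construction: assuming $v-w\geq\delta>0$ on some set $M\subset D$ of positive measure, for any compact $K\subset\subset D\setminus M$ one picks $f\in C^2_c(D\setminus M)$ with $f\equiv 1$ on $K$ and shows (Lemma~\ref{smpbasis}) that $u_a:=v-af-\delta 1_M$ is still a supersolution on $\supp f$ for $a>0$ small. The two ingredients are: (i) removing the mass $\delta 1_M$, supported away from $\supp f$, makes $(-\Delta)^s_p$ strictly larger on $\supp f$ by a fixed amount (Lemma~\ref{wahlvondelta}, which for $p<2$ uses the $L^\infty$ bound via \eqref{pin01}); and (ii) subtracting the smooth bump $af$ changes the operator by at most $O(\max\{a,a^{p-1}\})$ (Lemma~\ref{vglvf}). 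The restrictions $p>1/(1-s)$ in Case~1 and $\alpha(p-2)>sp-1$ in Case~2 enter precisely in step~(ii), as the integrability conditions needed for those singular integrals. Since $u_a\geq w$ off $\supp f$, the weak comparison (Lemma~\ref{wmp}) gives $u_a\geq w$ everywhere, hence $v-w\geq a$ on $K$.

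In your proposal the missing step is the very one you flag as ``the main obstacle'': a strong minimum principle for the linearised operator with kernel $\kappa$. In Case~2 you correctly note that $\kappa$ degenerates on the diagonal, but what your H\"older estimate yields is an \emph{upper} bound $\kappa(x,y)\lesssim |x-y|^{-N-\sigma}$; a strong minimum principle needs a pointwise \emph{lower} bound of this type, and $\kappa$ does not have one---it vanishes wherever $v(x)=v(y)$ and $w(x)=w(y)$, so no standard weak Harnack inequality applies. The paper sidesteps this by never linearising: inequality~\eqref{pgeq2} gives the needed monotonicity of the nonlinear difference directly, without any lower bound on the increments. In Case~1 your lower bound on $\kappa$ also does not follow as stated: with only one of $v,w$ in $L^\infty(\R^N)$ the other difference $w(x)-w(y)$ (or $v(x)-v(y)$) is uncontrolled, so $|\tau a+(1-\tau)b|$ need not be bounded; and your reading of $p>1/(1-s)$ as a condition on indicator test functions is not where it actually enters. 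The linearisation route may be salvageable, but it would require proving a nonstandard strong minimum principle for degenerate kernels, which is at least as much work as the paper's direct argument.
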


The weak comparison principle for the fractional $p$-Laplacian with $q=0$ goes back to \cite{LL14} (see also \cite{IMS14,KKP16a}). However, the validity of a strong comparison principle is already a delicate question in the case $s=1$, i.e. the case of the classical $p$-Laplacian. We refer here to the works \cite{PS04} and \cite{RS07}. Note that in the above nonlocal case, neither $v$ or $w$ need to be solutions and indeed to achieve such a statement we strongly use the nonlocal structure of the fractional $p$-Laplacian. In the case $p=2$, of course, the strong comparison principle follows from the strong maximum principle by linearity (see e.g. \cite{FJ13}). But in general, when $p\neq 2$, the strong maximum principle for the fractional $p$-Laplacian does not imply the strong comparison principle due to the nonlinear structure of the operator. For the strong maximum principle and a Hopf type lemma for the fractional $p$-Laplacian we refer to the recent work \cite{PQ17}.

For an application of Theorem \ref{smp} we investigate bounded nonnegative solutions of \eqref{main-prob2} in starshaped rings, i.e. we analyze
\begin{equation}\label{main-prob}
 \left\{\begin{aligned}
  (-\Delta)^s_pu+q(x)|u|^{p-2}u&=0&&\text{ in $D=D_0\setminus \overline{D}_1$;}\\
u&=0&&\text{ on $\R^N\setminus D_0$;}\\
u&=1&& \text{ on $D_1$,}
 \end{aligned}
\right.
\end{equation}
where $D_0,D_1\subset \R^N$ are open sets with $0\in \overline{D_1}\subset D_0$. For our main statement, we recall that a subset $A$ of $\R^N$ is said {\em starshaped with respect to the point} $\bar x\in A$ if for every $x\in A$ the segment $(1-s)\bar x+s x$, $s\in[0,1]$, is contained in $A$. If $\bar x=0$ (as we can always assume up to a translation), we simply say that $A$ is {\em starshaped}, meaning that for every $x\in A$ we have $sx\in A$ for $s\in[0,1]$, or equivalently
\begin{equation}\label{stardef}
A\text{ is starshaped if }\,\,sA\subseteq A\,\,\text{ for every }s\in[0,1]\,.
\end{equation}
$A$ is said {\em strictly starshaped} if $0$ is in the interior of $A$ and any ray starting from $0$ intersects the boundary of $A$ in only one point.

By $U(\ell)$, $\ell \in \R$ we denote the superlevel sets of a function $u:\R^N\to \R$:
$$
U(\ell):=\{u\geq \ell\}=\{x\in\rn\,:\, u(x)\geq \ell\}\,.
$$

\begin{thm}\label{res1}
	Let $s\in(0,1)$, $p>1$ and $D=D_0\setminus \overline{D}_1$ with $D_0, D_1\subset \R^N$ open bounded sets such that $0\in D_1$ and $\overline{D}_1\subset D_0$. Let $q:D\to [0,\infty)$ such that
	\begin{enumerate}
		\item[(A1)] $q$ is a bounded Borel-function and
		\item[(A2)] for all $t\geq 1$ and $x\in \R^N$ such that $tx\in D$ we have $t^{sp}q(tx)\geq q(x)$.
	\end{enumerate}
	Moreover, let $u$ be a continuous bounded weak solution of \eqref{main-prob} such that $0\leq u\leq 1$ and assume $D_0$ and $D_1$ are starshaped, then the superlevel sets  $U(\ell)$ of $u$ are starshaped for $\ell\in (0,1)$.\\
	If in addition $D_0$ and $D_1$ are strictly starshaped sets and
	\begin{enumerate}
		\item $\frac{1}{1-s}<p\leq 2$ \underline{or}
		\item $p\geq 2$ and $u\in C^{\alpha}_{loc}(D)$ for some $\alpha\in(0,1]$ with $\alpha(p-2)>sp-1$
	\end{enumerate}
 then the superlevel sets $U(\ell)$ of $u$ are strictly starshaped for $\ell\in (0,1)$.
\end{thm}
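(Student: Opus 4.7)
The strategy is to compare $u$ with its rescalings $w_t(x):=u(tx)$, $t\ge 1$, and apply the comparison principles. A change of variables $x\mapsto x/t$, $y\mapsto y/t$ in the weak formulation~\eqref{weak-formulation} for $u$ shows that $w_t$ weakly satisfies
\[
(-\Delta)^s_p w_t +t^{sp}q(tx)|w_t|^{p-2}w_t =0\qquad\text{in } t^{-1}D.
\]
Since $u\ge 0$ gives $w_t\ge 0$, assumption~(A2) yields $t^{sp}q(tx)|w_t|^{p-2}w_t\ge q(x)|w_t|^{p-2}w_t$ whenever $tx\in D$, and hence $w_t$ is a subsolution of~\eqref{main-prob2} (with $g\equiv 0$ and the original weight $q$) on $t^{-1}D$, in particular on the common domain $D':=D\cap t^{-1}D$. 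Simultaneously $u$ is a (super-)solution of~\eqref{main-prob2} on $D'$.

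Next I would verify the ordering $u\ge w_t$ on $\R^N\setminus D'$, using the starshapedness of $D_0$ and $\overline{D_1}$. Outside $D_0$, one has $u=0$, and starshapedness forces $tx\notin D_0$ (else $x=(1/t)(tx)\in D_0$), so $w_t=0$. On $\overline{D_1}$, $u\equiv 1\ge w_t$ by continuity. On the remaining piece $D\setminus t^{-1}D$, one has $x\in D_0\setminus\overline{D_1}$ and $tx\notin D$; the same argument applied to $\overline{D_1}$ rules out $tx\in\overline{D_1}$, so $tx\notin D_0$ and again $w_t=0\le u$. The weak comparison principle for the fractional $p$-Laplacian (e.g.\ \cite{LL14,IMS14,KKP16a}) now yields $u\ge w_t$ on $\R^N$, i.e.\ $u(tx)\le u(x)$ for every $x\in\R^N$ and every $t\ge 1$. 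This is precisely the starshapedness of every superlevel set $U(\ell)$.

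For the strict starshapedness assertion, assumptions~(1) or (2) ensure that the regularity of $u$ transfers to $w_t$ (scaling preserves $L^\infty$ and $C^\alpha_{loc}$), so Theorem~\ref{smp} applies to the pair $(u,w_t)$ on $D'$. Suppose for contradiction that $u(x_0)=u(t_0x_0)$ for some $t_0>1$ and $x_0$ with $u(x_0)\in(0,1)$. The boundary analysis above forces $t_0 x_0\in D$, hence $x_0\in D'$. By Theorem~\ref{smp} either $u\equiv w_{t_0}$ on $\R^N$ or $\essinf_K(u-w_{t_0})>0$ for every compact $K\subset\subset D'$. The latter contradicts $u(x_0)=w_{t_0}(x_0)$ by continuity. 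The former yields, upon iteration $u(x)=u(t_0^n x)$ for all $n\in\N$, a contradiction with $u|_{D_1}=1$: for $x\in D_1\setminus\{0\}$ one has $t_0^n x\notin D_0$ for $n$ large by boundedness of $D_0$, whence $u(x)=0$. Thus $t\mapsto u(te)$ is strictly decreasing on $\{t\ge 0: u(te)>0\}$ for every $e\in S^{N-1}$, so $\partial U(\ell)$ meets each ray from $0$ in exactly one point; combined with $0\in D_1\subset\inn U(\ell)$, this yields strict starshapedness of $U(\ell)$ for every $\ell\in(0,1)$. The main obstacle I expect is the rigorous scaling computation in the weak formulation and a clean case analysis at the boundary, after which Theorem~\ref{smp} supplies the strict inequality needed to promote starshapedness to strict starshapedness.
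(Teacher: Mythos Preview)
Your proposal is correct and follows essentially the same route as the paper: rescale $u$ to $w_t(x)=u(tx)$, use the scaling identity (the paper's Lemma~\ref{scaling}) together with (A2) to make $w_t$ a subsolution with the original weight $q$, check $u\ge w_t$ outside the common domain via the starshapedness of $D_0$ and $\overline{D_1}$, and then invoke the weak comparison principle (Lemma~\ref{wmp}) for the starshapedness part and Theorem~\ref{smp} for the strict part. Your boundary decomposition into $\R^N\setminus D_0$, $\overline{D_1}$, and $D\setminus t^{-1}D$ is exactly what is needed, and under the starshapedness hypotheses your set $D'=D\cap t^{-1}D$ coincides with the paper's $D_t=(t^{-1}D_0)\setminus\overline{D_1}$.

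The only substantive difference is in how the alternative $u\equiv w_{t_0}$ is excluded in the strict case. The paper first applies Theorem~\ref{smp} to the pairs $(1,u)$ and $(u,0)$ to obtain $0<u<1$ in $D$, and then observes that $u\equiv u_t$ is incompatible with this strict inequality (since, e.g., $t^{-1}\partial D_0$ meets $D$ by strict starshapedness, forcing $u$ to vanish there). You instead iterate $u(x)=u(t_0^n x)$ and send points of $D_1\setminus\{0\}$ outside $D_0$. Both arguments are valid; yours has the minor advantage of not invoking Theorem~\ref{smp} a second time. One small imprecision: your sentence ``$t\mapsto u(te)$ is strictly decreasing on $\{t\ge 0: u(te)>0\}$'' should read ``on $\{t\ge 0: u(te)\in(0,1)\}$'', since the map is constant equal to $1$ on an initial segment; this does not affect the conclusion for $\ell\in(0,1)$.
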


\begin{remark}
\begin{enumerate}
\item The starshapedness of superlevel sets is indeed a consequence of the weak comparison principle, hence the assumptions are rather general in this case. To prove the strict starshapedness of superlevel sets, however, we need the strong comparison principle and hence stronger assumptions on $u$, $s$ and $p$ in view of Theorem \ref{smp}. We note that in the case $q\equiv 0$ existence and local H\"older regularity of solutions of \eqref{main-prob} has been discussed in \cite{DKP16} so Theorem \ref{res1} can be applied for $p\in(0,1)$, $s<\frac{p-1}{p}$ and for any $p\geq 2$, $s<\frac{1}{p}$. 
\item In the case $p=2$ neither the bounds on $s$ nor the regularity assumption on $u$ are necessary (see \cite{JKS17}).
\end{enumerate}
\end{remark}

Let us close this introduction with the following further result in half spaces (see also \cite{FW16,CLL17a,CLL17b} for similar results).

\begin{thm}\label{res3}
Let $s\in(0,1)$, $p>1$ and denote $\R^N_+:=\{x\in \R^N\;:\; x_1>0\}$. Moreover, let $q\in L^{\infty}(\R^N_+)$, $q\geq 0$ and let $u\in \cW^{s,p}_0(\R^N_+)\cap L^{\infty}(\R^N_+)$ be a nonnegative continuous function which satisfies
\begin{equation}\label{main-prob3}
(-\Delta)^s_pu+q(x)|u|^{p-2}u=0\quad \text{ in $\R^N_+$;}\qquad \lim_{|x|\to \infty} u(x)=0.
\end{equation}
If $q$ is increasing in the direction of $x_1$, i.e. $q(x+te_1)\geq q(x)$ for all $x\in \Omega$, $t\geq0$, and
\begin{enumerate}
	\item $\frac{1}{1-s}<p\leq 2$ \underline{or}
	\item $p\geq 2$ and $u\in C^{\alpha}_{loc}(\R^N_+)$ for some $\alpha\in(0,1]$ with $\alpha(p-2)>sp-1$,
\end{enumerate}
then $u\equiv 0$ on $\R^N$.
\end{thm}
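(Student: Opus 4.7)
The plan is to apply the method of moving planes in the $x_1$-direction, with Theorem \ref{smp} providing the strong comparison needed to propagate the sliding. For $\lambda > 0$ denote the reflection across $\{x_1=\lambda\}$ by $x^\lambda:=(2\lambda-x_1,x_2,\ldots,x_N)$, set $u_\lambda(x):=u(x^\lambda)$, and write $\Sigma_\lambda^+:=\{x\in\R^N : 0<x_1<\lambda\}$. The goal is to show
\[
u \geq u_\lambda \quad \text{in } \Sigma_\lambda^+ \qquad \text{for every } \lambda > 0.
\]
Granting this and letting $\lambda\to\infty$ shows that $u$ is non-increasing in $x_1$ on $\R^N_+$. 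Combined with the continuity of $u$, the identity $u(0,\cdot)=0$ (coming from $u\equiv 0$ on $\R^N\setminus\R^N_+$), and $u\geq 0$, this forces $u\equiv 0$.

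The equation for $u_\lambda$ follows from the reflection invariance of the kernel $|x-y|^{-(N+sp)}$: one has $(-\Delta)^s_p u_\lambda(x) = (-\Delta)^s_p u(x^\lambda)$, and hence
\[
(-\Delta)^s_p u_\lambda + q(x^\lambda)|u_\lambda|^{p-2}u_\lambda = 0 \quad \text{in } \{x_1 < 2\lambda\}.
\]
For $x\in\Sigma_\lambda^+$, $x_1^\lambda=2\lambda-x_1>x_1$, so the monotonicity of $q$ yields $q(x^\lambda)\geq q(x)$; combined with $u_\lambda\geq 0$, this makes $u_\lambda$ a \emph{subsolution} of $(-\Delta)^s_p v + q(x)|v|^{p-2}v = 0$ in $\Sigma_\lambda^+$, whereas $u$ itself is a solution (hence in particular a supersolution) of the same equation.

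The moving planes then runs as a standard continuation. A narrow-region maximum-principle argument for antisymmetric functions (exploiting the smallness of $|\Sigma_\lambda^+|$ for $\lambda>0$ small and the boundary datum $u=0$ on $\{x_1\leq 0\}$) gives the inequality for $\lambda$ sufficiently small. Set $\lambda_0 := \sup\{\lambda : u\geq u_\mu \text{ in }\Sigma_\mu^+ \text{ for all } \mu\in(0,\lambda]\}$ and assume $\lambda_0<\infty$ for contradiction. By continuity, $u\geq u_{\lambda_0}$ in $\Sigma_{\lambda_0}^+$, and Theorem \ref{smp} (whose cases 1 and 2 coincide with the present hypotheses) yields either $u\equiv u_{\lambda_0}$ on $\R^N$, or $\underset{K}{\essinf}(u-u_{\lambda_0})>0$ for every $K\subset\subset \Sigma_{\lambda_0}^+$. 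The first alternative would force $u$ to be symmetric across $\{x_1=\lambda_0\}$, incompatible with $u(x)\to 0$ as $|x|\to\infty$ unless $u\equiv 0$. The second alternative, combined with the decay of $u$ at infinity, allows extending the inequality to $\Sigma_{\lambda_0+\delta}^+$ for some $\delta>0$ via a compactness argument, contradicting the maximality of $\lambda_0$. Thus $\lambda_0=\infty$, the inequality holds, and $u\equiv 0$ follows.

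The \emph{main obstacle} is the application of Theorem \ref{smp}: it requires the ordering $v\geq w$ \emph{globally} on $\R^N$, whereas in the half-space configuration the relation $u\geq u_{\lambda_0}$ fails on $\{x_1\leq 0\}$ (there $u=0$ while $u_{\lambda_0}$ is generally positive). Circumventing this requires exploiting the antisymmetric structure of $u-u_{\lambda_0}$ under reflection across $\{x_1=\lambda_0\}$ together with the vanishing of $u$ outside $\R^N_+$ -- for instance by localizing to suitable subdomains or passing through a truncation of $u_{\lambda_0}$ -- so that Theorem \ref{smp} can effectively be invoked. Making this precise for the nonlinear $p$-Laplacian, and verifying that the narrow-region step goes through under the regularity assumptions of cases 1 and 2, constitute the principal technical challenges of the argument.
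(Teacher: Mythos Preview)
Your plan differs substantially from the paper's. You set up a moving-planes argument via the reflection $u_\lambda(x)=u(x^\lambda)$ across $\{x_1=\lambda\}$, with the usual start/continuation machinery and a narrow-region step. The paper instead uses a direct \emph{sliding} (translation) argument: for $t\ge 0$ set $u_t(x):=u(x+te_1)$; by the monotonicity of $q$ each $u_t$ is a subsolution of $(-\Delta)^s_p v+q(x)v^{\ast(p-1)}=0$, so a single application of the weak comparison principle (Lemma~\ref{wmp}) yields $u\ge u_t$, and then a single application of Theorem~\ref{smp} upgrades this to $u>u_t$ in $\R^N_+$ for every $t>0$ (the alternative $u\equiv u_t$ being excluded since it would force $u=0$ on the strip $\{0<x_1<t\}$, contradicting $u>0$). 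This already says $u$ is strictly decreasing in $x_1$, which together with continuity, $u\ge 0$ and $u|_{\partial\R^N_+}=0$ gives the contradiction.

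The paper's route is therefore considerably shorter: no narrow-region principle, no supremum $\lambda_0$, no continuation argument. More to the point, the obstacle you correctly flag --- that Theorem~\ref{smp} requires the ordering $v\ge w$ on \emph{all} of $\R^N$, which fails for the reflected function on $\{x_1\le 0\}$ --- is a genuine difficulty in the reflection setting: overcoming it would require a strong comparison principle adapted to \emph{antisymmetric} functions, and no such tool is provided here. In the translation setting this issue does not arise at the strong-comparison stage, because once the weak comparison has produced $u\ge u_t$ it already holds on all of $\R^N$, so Theorem~\ref{smp} applies as stated. In short, your outline is a plausible strategy, but it imports a serious technical problem that the paper's sliding argument is specifically designed to bypass; if you want to work with Lemma~\ref{wmp} and Theorem~\ref{smp} as they stand, translation rather than reflection is the natural choice.
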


The article is organized as follows. In Section \ref{prem} we give some basic properties on the involved function spaces and useful elementary inequalities. In Section \ref{comparison} we give the proof of a variant of a weak comparison principle and then prove Theorem \ref{smp}. The proofs of Theorem \ref{res1} and \ref{res3} are given in Section \ref{applications}.

\subsection*{Acknowledgement}

The author thanks Tadeusz Kulczycki, Paolo Salani, and Giampiero Palatucci for careful reading and discussions. Moreover, he thanks Tobias Weth for raising the question on the strong comparison principle.

\section{Preliminaries and notation}\label{prem}
We will use the following notation. For subsets $D,U \subset \R^N$ we denote by $D^c:=\R^N\setminus D$ the complement of $D$ in $\R^N$ and we write $\dist(D,U):= \inf\{|x-y|\::\: x \in D,\, y \in U\}$.  If $D= \{x\}$ is a singleton, we write $\dist(x,U)$ in place of $\dist(\{x\},U)$. The notation $U\subset \subset D$ means that $\overline{U}$ is compact and contained in $D$. For $U\subset\R^{N}$ and $r>0$ we consider $B_{r}(U):=\{x\in\R^{N}\;:\; \dist(x,U)<r\}$, and we let, as usual  
 $B_r(x)=B_{r}(\{x\})$ be the open ball in $\R^{N}$ centered at $x \in \R^N$ with radius $r>0$. For any subset $M \subset \R^N$, we denote by $1_M: \R^N \to \R$ the
characteristic function of $M$ and by $\diam(M)$ the diameter of $M$. If $M$ is measurable, $|M|$ denotes the Lebesgue measure of $M$. For the unit ball we will use in particular $\omega_N:=|B_1(0)|$. Moreover, if $w: M \to \R$ is a function,  we let $w^+= \max\{w,0\}$ and $w^-=-\min\{w,0\}$ denote the positive and negative part of $w$, resp. 

\subsection{Some elementary inequalities}

We use the notation $a^{\ast q}:=|a|^{q-1}a$ for any $a\in \R$, $q>0$. Note that for $a\geq 0$ we have $a^{\ast q}=a^q$ and for $a<0$ we have $a^{\ast q}=-|a|^q$. Moreover, we have the following elementary inequalities of this function:

\begin{lemma}[see Section 2.2, \cite{IMS14}] For all $b\geq0$, $q>0$
	\begin{align}
	 (a+b)^{q}&\leq \max\{1,2^{q-1}\}\left( a^q+b^q\right)&\text{ if $a\geq0$.}\label{pgeq2klein}\\
	 (a+b)^{\ast q}-a^{\ast q}&\geq 2^{1-q}b^q&\text{ if $a\in \R$, $q\geq 1$.}\label{pgeq2}
	\end{align}
\end{lemma}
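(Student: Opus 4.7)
The plan is to handle the two inequalities separately, each by an elementary argument.

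For \eqref{pgeq2klein}, I would split on whether $q\ge 1$ or $0<q<1$. When $q\ge 1$, the map $t\mapsto t^q$ is convex on $[0,\infty)$, so the midpoint form of Jensen's inequality gives $\bigl(\tfrac{a+b}{2}\bigr)^q\le \tfrac12(a^q+b^q)$, and multiplying by $2^q$ yields $(a+b)^q\le 2^{q-1}(a^q+b^q)$. When $0<q<1$, $\max\{1,2^{q-1}\}=1$ and the claim reduces to the subadditivity $(a+b)^q\le a^q+b^q$. The trivial case $a=0$ aside, dividing by $a^q$ reduces this to $(1+x)^q\le 1+x^q$ for $x:=b/a\ge 0$. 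I would prove the latter by setting $h(x):=1+x^q-(1+x)^q$ and noting that $h(0)=0$ while $h'(x)=q\bigl(x^{q-1}-(1+x)^{q-1}\bigr)\ge 0$, since $q-1<0$ and $x\le 1+x$.

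For \eqref{pgeq2}, my approach is to fix $b\ge 0$ and treat $\phi:\R\to\R$, $\phi(a):=(a+b)^{*q}-a^{*q}$, as a function of the single variable $a$, then locate its global minimum. Using $\tfrac{d}{dt}t^{*q}=q|t|^{q-1}$ on $\R\setminus\{0\}$, one has
$$\phi'(a)=q\bigl(|a+b|^{q-1}-|a|^{q-1}\bigr).$$
Since $q-1\ge 0$, the function $t\mapsto|t|^{q-1}$ is monotone in $|t|$, so the sign of $\phi'(a)$ coincides with the sign of $|a+b|-|a|$. This quantity is negative for $a<-b/2$, vanishes at $a=-b/2$, and is positive for $a>-b/2$, so by continuity of $\phi$ the global minimum is attained at $a_*=-b/2$. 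A direct evaluation gives
$$\phi(-b/2)=(b/2)^{*q}-(-b/2)^{*q}=(b/2)^q+(b/2)^q=2\cdot(b/2)^q=2^{1-q}b^q,$$
which is precisely the claimed lower bound.

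There is no real obstacle here: both bounds are classical calculus facts. The only minor care needed is bookkeeping the signs in the definition $t^{*q}=|t|^{q-1}t$ and the non-smoothness of $|t|^{q-1}$ at $t=0$; this does not affect the minimization since the critical point $a_*=-b/2$ stays away from the singular set $\{0,-b\}$ whenever $b>0$ (the case $b=0$ being trivial), and $\phi$ is continuous on all of $\R$.
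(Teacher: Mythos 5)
Your proposal is correct, but note that the paper itself offers no proof of this lemma: it is simply quoted from \cite{IMS14} (Section 2.2), so there is no internal argument to compare against. Your self-contained derivation is a perfectly standard substitute. For \eqref{pgeq2klein}, the split into convexity of $t\mapsto t^q$ for $q\geq 1$ and subadditivity via the monotone auxiliary function $h(x)=1+x^q-(1+x)^q$ for $0<q<1$ is complete (the singularity of $h'$ at $x=0$ is harmless, as you only need $h'\geq 0$ on $(0,\infty)$ together with continuity at $0$). For \eqref{pgeq2}, minimizing $\phi(a)=(a+b)^{\ast q}-a^{\ast q}$ in $a$ and evaluating at $a_\ast=-b/2$ is exactly the right idea, and your bookkeeping of $\frac{d}{dt}t^{\ast q}=q|t|^{q-1}$ and of the non-smooth points $\{0,-b\}$ is sound; the only cosmetic imprecision is the boundary case $q=1$, where $\phi\equiv b$ is constant, so the claim that the sign of $\phi'$ equals the sign of $|a+b|-|a|$ degenerates, but the asserted bound then holds with equality, so nothing breaks. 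An equivalent and slightly slicker route to \eqref{pgeq2}, closer in spirit to the computations the paper does use elsewhere (e.g. in the proof of \eqref{pin01}), is to write $(a+b)^{\ast q}-a^{\ast q}=q\int_a^{a+b}|t|^{q-1}\,dt$ and observe that this integral over an interval of length $b$ is minimized when the interval is centered at the origin, giving $2q\int_0^{b/2}t^{q-1}\,dt=2^{1-q}b^q$; this is the same minimization you perform, just phrased without differentiating $\phi$.
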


\begin{lemma}
	Let $M,q>0$. Then there are $C_{M,1},C_{M,2}>0$ such that for all $a\in[-M,M]$, $b\geq 0$
	\begin{align}
	a^{\ast q}-(a-b)^{\ast q}&\leq C_{M,1} \max\{b,b^{q}\},\quad \label{pgen}\\
	(a+b)^{\ast q}-a^{\ast q}&\geq C_{M,2} \min\{b,b^q\}. \quad\label{pin01}
	\end{align}
\end{lemma}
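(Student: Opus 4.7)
I will exploit the integral representation
$$
f(a+\beta)-f(a)=\int_a^{a+\beta}q|t|^{q-1}\,dt\qquad\text{for }f(t):=t^{\ast q},\ a,\beta\in\R,
$$
which is valid even when $0$ lies inside the interval, because $t\mapsto |t|^{q-1}$ is locally integrable on $\R$ for every $q>0$. The proof of each of \eqref{pgen}, \eqref{pin01} then splits according to whether $q\ge 1$ or $q\in(0,1)$.

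For the upper estimate \eqref{pgen}, write $a^{\ast q}-(a-b)^{\ast q}=\int_{a-b}^{a}q|t|^{q-1}\,dt$. When $q\ge 1$ the integrand is bounded pointwise on $[a-b,a]$ by $q(M+b)^{q-1}$, and an application of \eqref{pgeq2klein} with exponent $q-1\ge 0$ converts $qb(M+b)^{q-1}$ into a bound of the form $C_M(b+b^q)\le 2C_M\max\{b,b^q\}$. When $q\in(0,1)$ the integrand is singular at $0$, so instead I will split according to the signs of $a$ and $a-b$: in the both-nonnegative case, subadditivity of $t\mapsto t^q$ on $[0,\infty)$ (valid for $q\le 1$) yields $a^q-(a-b)^q\le b^q$; the both-nonpositive case reduces to this by the odd symmetry of $f$; and if $a-b<0<a$, then $a^{\ast q}-(a-b)^{\ast q}=a^q+(b-a)^q$ with $a,b-a\in[0,b]$, so each summand is $\le b^q$. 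In every subcase the bound is $\le 2b^q\le 2\max\{b,b^q\}$ with a constant independent of $M$.

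For the lower estimate \eqref{pin01}, the case $q\ge 1$ is immediate from the already-stated inequality \eqref{pgeq2}: $(a+b)^{\ast q}-a^{\ast q}\ge 2^{1-q}b^q\ge 2^{1-q}\min\{b,b^q\}$, since $b^q\ge \min\{b,b^q\}$ whenever $q\ge 1$ (easy to check for $b\le 1$ and $b\ge 1$ separately). For $q\in(0,1)$ I use that $|t|^{q-1}$ is a \emph{decreasing} function of $|t|$, so on $[a,a+b]$ it is bounded below by $(M+b)^{q-1}$; this gives $(a+b)^{\ast q}-a^{\ast q}\ge qb(M+b)^{q-1}$. Splitting on $b\le 1$ and $b\ge 1$, the right side dominates $c_{M,q}\,b=c_{M,q}\min\{b,b^q\}$ in the first regime (use $(M+b)^{q-1}\ge (M+1)^{q-1}$), and $c_{M,q}\,b^q=c_{M,q}\min\{b,b^q\}$ in the second (use $M+b\le 2\max(M,b)$ together with $q-1<0$ and $b\ge 1$ to absorb $M$-dependent factors).

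The main technical obstacle I anticipate is the uniform choice of constants in the $q\in(0,1)$ case of \eqref{pin01} when $b$ is of order $M$, since the linear factor $b$ produced by the gradient bound and the target power $b^q$ are not cleanly comparable without additional case work; the resolution is the observation that on the compact range $b\in[1,M]$ both quantities are bounded by constants depending only on $M$ and $q$, so the split into $b\le 1$ and $b\ge 1$ suffices for the bookkeeping.
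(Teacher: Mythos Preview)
Your proposal is correct. Both you and the paper rely on the integral representation
\[
(a+\beta)^{\ast q}-a^{\ast q}=q\int_a^{a+\beta}|t|^{q-1}\,dt,
\]
but the routes diverge in organisation. For \eqref{pgen} the paper simply cites \cite[Section~2.2]{IMS14}, whereas you supply a self-contained argument; in particular your treatment of the range $q\in(0,1)$ via subadditivity of $t\mapsto t^q$ is clean and, as you note, does not even use the bound $|a|\le M$. For \eqref{pin01} with $q\in(0,1)$ the paper splits according to $b>|a|$ versus $b\le|a|$: in the first regime it rescales by $b$ and works on the fixed interval $[-1,1]$ to obtain a bound $\ge q2^{q-1}b^q$, while in the second it uses the mean-value form with $|a+vb|\le|a|+b\le 2M$ to get $\ge q2^{q-1}M^{q-1}b$. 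Your split is instead on $b\le 1$ versus $b\ge 1$, applying the single pointwise lower bound $|t|^{q-1}\ge(M+b)^{q-1}$ uniformly. The paper's scaling trick yields an $M$-independent constant in the large-$b$ regime, while your approach is slightly more elementary and avoids the rescaling step; the trade-off is that your constant $q(M+1)^{q-1}$ depends on $M$ throughout. Either way the bookkeeping closes, and your observation that $qb(M+b)^{q-1}=qb^q(1+M/b)^{q-1}\ge q(1+M)^{q-1}b^q$ for $b\ge1$ is in fact tidier than the $2\max(M,b)$ detour you sketched.
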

\begin{proof}

Inequality (\ref{pgen}) is shown in \cite[Section 2.2]{IMS14}. Moreover, if $q\geq1$ then (\ref{pin01}) follows from (\ref{pgeq2}) and indeed no bound on $a$ is needed. For $q\in(0,1)$ fix $M>0$ as stated and let $a\in [-M,M]$. Note that the map $t\mapsto (t+a)^{\ast q}-t^{\ast q}$ satisfies for $t\in[-1,1]$
\[
(t+1)^{\ast q}-t^{\ast q}=q\int_{t}^{t+1}|v|^{q-1}\ dv\geq q\max\{|t|,|t+1|\}^{q-1}\geq q2^{q-1},
\]
since $q<1$. Hence for $b>\max\{0,|a|\}$ we have
\[
\frac{ (a+b)^{\ast q}-a^{\ast q}}{b^{q}}=\left(\frac{a}{b}+1\right)^{\ast q}-\left(\frac{a}{b}\right)^{\ast q}\geq q2^{q-1}.
\]
And for $0\le b\le|a|$ -- using again that $q<1$ -- we have
\[
(a+b)^{\ast q}-a^{\ast q}=qb\int_{0}^{1}|a+vb|^{q-1}\ dv \geq q(|a|+|b|)^{q-1} b\geq q 2^{q-1}M^{q-1} b.
\]	
\end{proof}

\begin{lemma}[see Lemma 2, \cite{L15}]
	For all $q\in(0,1]$ there is $C>0$ such that
	\begin{equation}\label{p01speziell}
	|(a+b)^{\ast q}-a^{\ast q}|\leq C|b|^q \quad\text{ for all $a,b\in \R$.}
	\end{equation}
\end{lemma}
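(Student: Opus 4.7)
The plan is to argue by case analysis on the signs of $a$ and $a+b$, reducing everything to the subadditivity inequality $(x+y)^q \le x^q + y^q$ for $x,y \ge 0$, which is exactly the $q\in(0,1]$ specialization of \eqref{pgeq2klein} (since $\max\{1,2^{q-1}\}=1$ when $q\le 1$). The case $q=1$ is immediate with $C=1$ from the ordinary triangle inequality, so throughout I assume $q\in(0,1)$.

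Using the identity $(-x)^{\ast q}=-x^{\ast q}$, I may replace $(a,b)$ by $(-a,-b)$ if necessary and hence assume $a\ge 0$ without loss of generality. I then split on the sign of $a+b$. If $a+b\ge 0$, both $a^{\ast q}=a^q$ and $(a+b)^{\ast q}=(a+b)^q$ are nonnegative. For $b\ge 0$, the subadditivity gives $(a+b)^q-a^q\le b^q=|b|^q$; for $b<0$, applying the same inequality to the decomposition $a=(a+b)+|b|$ yields $a^q-(a+b)^q\le|b|^q$. If instead $a+b<0$, then necessarily $|b|>a\ge 0$ and $|a+b|=|b|-a\in[0,|b|]$, so
\[
|(a+b)^{\ast q}-a^{\ast q}|=a^q+|a+b|^q\le 2|b|^q
\]
by monotonicity of $t\mapsto t^q$ on $[0,\infty)$. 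Combining the cases, the inequality holds with $C=2$.

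The only (mild) obstacle is the last case, where $a$ and $a+b$ straddle the origin and the function $t\mapsto t^{\ast q}$ changes sign: the monotonicity trick used when $a+b\ge 0$ is no longer available on a single interval. One compensates by the observation that both $|a|$ and $|a+b|$ are dominated by $|b|$, which is enough to close the estimate with a constant independent of $a$.
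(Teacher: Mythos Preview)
Your proof is correct; the case analysis is clean and the constant $C=2$ is sharp enough for every later use in the paper. Note, however, that the paper does not actually give its own proof of this lemma---it simply quotes the result from \cite{L15}---so there is nothing to compare your argument against. Your self-contained argument via subadditivity of $t\mapsto t^q$ on $[0,\infty)$ is a perfectly good replacement for the citation.
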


\subsection{Function spaces and their properties}
In the following we let $s\in(0,1)$, $p>1$, and $D\subset \R^N$ open. We denote $\cW^{s,p}_0(D)$ and $\tilde{W}^{s,p}(D)$ as introduced in \eqref{wsp-d}, \eqref{wsp-gen}.
Moreover, we denote formally for function $u,v:\R^N\to\R$
\begin{equation}\label{pairing}
\langle u,v\rangle_{s,p}:= \int_{\R^N}\int_{\R^N} \frac{(u(x)-u(y))^{\ast(p-1)}(v(x)-v(y))}{|x-y|^{N+sp}}\ dxdy.
\end{equation}
Recall that $\cW^{s,p}_0(D)$ is a Banach space with the norm
\[
\|u\|_{W^{s,p}}:=\Big( \|u\|^p_{L^p(D)} + \langle u,u\rangle_{s,p}\Big)^{\frac{1}{p}}
\]
and corresponds to the completion of $C^{\infty}_c(D)$ w.r.t. this norm (see e.g. \cite{G85}).

\begin{lemma}\label{well-defined}
The map $\langle\cdot,\cdot\rangle_{s,p}:\tilde{W}^{s,p}(D) \times \cW^{s,p}_0(D)\to \R$ is well-defined.
\end{lemma}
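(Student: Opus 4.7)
The plan is to show that the integrand
\[
F(x,y):=\frac{|u(x)-u(y)|^{p-1}\,|v(x)-v(y)|}{|x-y|^{N+sp}}
\]
is integrable on $\R^N\times\R^N$, which (since $F$ dominates the signed integrand defining $\langle u,v\rangle_{s,p}$) will establish well-definedness. The key observation is that $F$ is symmetric in $(x,y)$, so I can split the domain of integration as
\[
\R^N\times\R^N=(D\times\R^N)\cup(D^c\times D)\cup(D^c\times D^c),
\]
and use symmetry to reduce the middle piece to an integral over $D\times D^c\subset D\times\R^N$.

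On $D^c\times D^c$ the function $v$ vanishes on both coordinates (since $v\in\cW_0^{s,p}(D)$), so $v(x)-v(y)\equiv 0$ and this piece of the integral is zero. It therefore suffices to control $\int_{D\times\R^N} F(x,y)\,dx\,dy$. Here I would apply Hölder's inequality with exponents $p/(p-1)$ and $p$, splitting the kernel as $|x-y|^{-(N+sp)(p-1)/p}\cdot|x-y|^{-(N+sp)/p}$, to obtain
\[
\int_D\!\int_{\R^N}F(x,y)\,dx\,dy\;\leq\;\Bigl(\int_D\!\int_{\R^N}\frac{|u(x)-u(y)|^p}{|x-y|^{N+sp}}\,dx\,dy\Bigr)^{\!\frac{p-1}{p}}\Bigl(\int_D\!\int_{\R^N}\frac{|v(x)-v(y)|^p}{|x-y|^{N+sp}}\,dx\,dy\Bigr)^{\!\frac{1}{p}}.
\]
The first factor is finite by the definition \eqref{wsp-gen} of $\tilde W^{s,p}(D)$, and the second is bounded by $\|v\|_{W^{s,p}(\R^N)}<\infty$ since $v\in\cW^{s,p}_0(D)\subset W^{s,p}(\R^N)$.

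Putting the pieces together via the decomposition $\int_{\R^N\times\R^N}=2\int_{D\times\R^N}-\int_{D\times D}+\int_{D^c\times D^c}$, the first and second terms are finite by the Hölder estimate, and the third vanishes as noted. This gives absolute integrability of $F$, hence the integral \eqref{pairing} converges absolutely and $\langle u,v\rangle_{s,p}\in\R$ is well-defined.

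I don't expect a genuine obstacle: the only subtlety is noticing that one must exploit the vanishing of $v$ on $D^c$ to handle the $D^c\times D^c$ piece, since $u$ itself need not satisfy any Gagliardo-type finiteness there. Everything else is a routine application of Hölder's inequality together with the symmetry of the kernel.
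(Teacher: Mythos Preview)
Your proposal is correct and follows essentially the same route as the paper: both arguments observe that the integrand vanishes on $D^c\times D^c$ (since $v\equiv 0$ there), reduce to bounding the integral over $D\times\R^N$ (the paper handles $D\times\R^N$ and $(\R^N\setminus D)\times D$ separately, you use the symmetry of $F$ to fold the latter into the former), and then apply H\"older's inequality with exponents $p/(p-1)$ and $p$ to invoke the defining finiteness of $\tilde W^{s,p}(D)$ and $\cW^{s,p}_0(D)$. The only cosmetic difference is your use of the identity $\int_{\R^N\times\R^N}=2\int_{D\times\R^N}-\int_{D\times D}+\int_{D^c\times D^c}$ in place of the paper's direct two-piece split, which is equivalent.
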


\begin{proof}
We have by H\"older's inequality with $q=\frac{p}{p-1}$
\begin{align*}
|\langle u,v\rangle_{s,p}|&\leq \int_{\R^N}\int_{\R^N} \frac{|u(x)-u(y)|^{p-1}|v(x)-v(y)|}{|x-y|^{N+sp}}\ dxdy\\
&=\int_{D}\int_{\R^N} \frac{|u(x)-u(y)|^{p-1}|v(x)-v(y)|}{|x-y|^{N+sp}}\ dxdy+\int_{\R^N\setminus D}\int_{D} \frac{|u(x)-u(y)|^{p-1}|v(x)|}{|x-y|^{N+sp}}\ dxdy\\
&\leq \Bigg( \int_{D}\int_{\R^N} \frac{|u(x)-u(y)|^{p}}{|x-y|^{[\frac{N}{q}+s(p-1) ]q}}\ dxdy\Bigg)^{\frac{1}{q}} \Bigg( \int_{D}\int_{\R^N} \frac{|v(x)-v(y)|^{p}}{|x-y|^{[\frac{N}{p}+s]p}}\ dxdy \Bigg)^{\frac{1}{p}}\\
&\qquad  + \Bigg( \int_{\R^N\setminus D}\int_{D} \frac{|u(x)-u(y)|^{p}}{|x-y|^{[\frac{N}{q}+s(p-1) ]q}}\ dxdy\Bigg)^{\frac{1}{q}} \Bigg( \int_{\R^N\setminus D}\int_{D} \frac{|v(x)|^{p}}{|x-y|^{[\frac{N}{p}+s]p}}\ dxdy \Bigg)^{\frac{1}{p}}\\
&\leq 2\Bigg( \int_{D}\int_{\R^N} \frac{|u(x)-u(y)|^{p}}{|x-y|^{N+sp}}\ dxdy\Bigg)^{\frac{1}{q}}\|v\|_{W^{s,p}(\R^N)}<\infty.
\end{align*}
\end{proof}	

\begin{lemma}\label{pos0}
	We have $\tilde{W}^{s,p}(D)$ is a vector space with the following properties:
	\begin{enumerate}
		\item $C^{0,1}_c(\R^N)\subset W^{s,p}(\R^N)\subset \tilde{W}^{s,p}(D)$,
		\item If $u\in \tilde{W}^{s,p}(D)$, then $u^{\pm}\in \tilde{W}^{s,p}(D)$.
	\end{enumerate} 	
\end{lemma}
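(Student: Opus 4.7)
The plan is to address each claim separately; all three reduce to direct estimates on the Gagliardo-type double integral in \eqref{wsp-gen}. For the vector space property, $L^p_{loc}(\R^N)$ is already a vector space, so I only need finiteness of the double integral under linear combinations, which follows from the elementary inequality $|a+b|^p\le 2^{p-1}(|a|^p+|b|^p)$ applied with $a=\alpha(u(x)-u(y))$ and $b=\beta(v(x)-v(y))$ and then integrated over $D\times\R^N$.

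For part (1), the inclusion $W^{s,p}(\R^N)\subset \tilde{W}^{s,p}(D)$ is immediate: $L^p(\R^N)\subset L^p_{loc}(\R^N)$, and the double integral in \eqref{wsp-gen} is dominated by the one in \eqref{wsp-rn} since $D\subset\R^N$. For the inclusion $C^{0,1}_c(\R^N)\subset W^{s,p}(\R^N)$, I take $u$ Lipschitz on $\R^N$ with constant $L$ and support contained in a compact set $K$, and split $\R^N\times\R^N$ into the near part $\{|x-y|<1\}$ and the far part $\{|x-y|\ge1\}$. On the near part, the Lipschitz bound $|u(x)-u(y)|^p\le L^p|x-y|^p$ holds everywhere, and since the integrand vanishes when both $x$ and $y$ lie outside $K$, I may by symmetry restrict to $x\in K$ and bound the integral by $2L^p|K|\int_{|z|<1}|z|^{p-N-sp}\,dz$, which is finite because $p(1-s)>0$. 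On the far part, $|u(x)-u(y)|\le 2\|u\|_\infty$, at least one of $x,y$ lies in $K$, and $\int_{|z|\ge1}|z|^{-N-sp}\,dz<\infty$, so this contribution is also finite.

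For part (2), I would use the pointwise inequality $|u^\pm(x)-u^\pm(y)|\le |u(x)-u(y)|$, which follows from the $1$-Lipschitz character of $t\mapsto t^\pm$ (verified by a four-case analysis on the signs of $u(x)$ and $u(y)$). Raising to the $p$-th power and integrating against $|x-y|^{-N-sp}$ over $D\times\R^N$ transfers the finiteness from $u$ to $u^\pm$, and $|u^\pm|\le|u|$ pointwise ensures $u^\pm\in L^p_{loc}(\R^N)$. The only mildly delicate point is the near-diagonal estimate in part (1), where the compact support is essential to absorb the Lipschitz bound; everything else is routine packaging of pointwise inequalities.
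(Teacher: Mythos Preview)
Your proof is correct and follows essentially the same approach as the paper. The only cosmetic differences are that the paper cites \cite{G85} for the inclusion $C^{0,1}_c(\R^N)\subset W^{s,p}(\R^N)$ rather than writing out the near/far splitting, and for part (2) the paper first shows $|u|\in\tilde{W}^{s,p}(D)$ via $\big||u|(x)-|u|(y)\big|\le|u(x)-u(y)|$ and then deduces $u^\pm=\tfrac{1}{2}(|u|\pm u)\in\tilde{W}^{s,p}(D)$ from the vector space property, whereas you apply the $1$-Lipschitz estimate directly to $t\mapsto t^\pm$; both routes amount to the same pointwise inequality.
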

\begin{proof}
The fact that $\tilde{W}^{s,p}(D)$ is a vector space follows from \eqref{pgeq2klein}. Moreover, we have $C^{0,1}_c(\R^N)\subset W^{s,p}(\R^N)$ (see e.g. \cite{G85}), and $u\in W^{s,p}(\R^N)\subset \tilde{W}^{s,p}(D)$ is trivial. For the second statement, note that we have $|u|\in \tilde{W}^{s,p}(D)$, since
\[
|u(x)-u(y)|\geq \big||u|(x)-|u|(y)\big| \quad \text{for all $x,y\in \R^N$.}
\]
Hence $2u^{\pm}=|u|\pm u\in \tilde{W}^{s,p}(D)$.
\end{proof}

\begin{lemma}\label{restriction}
	Let $D$ be bounded and $u\in \tilde{W}^{s,p}(D)$ with $u=0$ on $\R^N\setminus D$. Then $u\in \cW^{s,p}_0(D)$.
\end{lemma}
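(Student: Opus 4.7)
The plan is to verify directly the two conditions that define $\cW^{s,p}_0(D)$: that $u\in L^p(\R^N)$ and that the full Gagliardo double integral $\int_{\R^N}\int_{\R^N}\frac{|u(x)-u(y)|^p}{|x-y|^{N+sp}}\,dx\,dy$ is finite. Vanishing of $u$ on $\R^N\setminus D$ is given by hypothesis.

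The $L^p$-bound is immediate. Since $u\equiv 0$ outside $D$, one has $\|u\|_{L^p(\R^N)}^p=\|u\|_{L^p(D)}^p$, and boundedness of $D$ allows the choice of $R>0$ with $D\subset B_R(0)$. Then $u\in L^p_{loc}(\R^N)$ yields $\|u\|_{L^p(D)}\le\|u\|_{L^p(B_R(0))}<\infty$.

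For the Gagliardo integral I decompose $\R^N\times\R^N$ into the four products $D\times D$, $D\times D^c$, $D^c\times D$, and $D^c\times D^c$. On $D^c\times D^c$ the integrand vanishes pointwise because $u(x)=u(y)=0$. The integrand is symmetric in $(x,y)$, so by Fubini the contributions of $D\times D^c$ and $D^c\times D$ coincide. Collecting the surviving pieces yields
\[
\int_{\R^N}\int_{\R^N}\frac{|u(x)-u(y)|^p}{|x-y|^{N+sp}}\,dx\,dy\le 2\int_D\int_{\R^N}\frac{|u(x)-u(y)|^p}{|x-y|^{N+sp}}\,dy\,dx,
\]
and the right-hand side is finite by the very definition of $\tilde W^{s,p}(D)$. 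There is no genuine obstacle here; the only observation worth emphasizing is that, thanks to $u\equiv 0$ on $D^c$, both the tail contribution coming from interactions between $D$ and $D^c$ and the contribution from $D^c\times D^c$ are already controlled by the single-sided integral over $D\times\R^N$ required by membership in $\tilde W^{s,p}(D)$.
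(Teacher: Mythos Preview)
Your proof is correct and follows essentially the same route as the paper. Both arguments observe that $u\in L^p(D)$ by boundedness of $D$ and $u\in L^p_{loc}(\R^N)$, then split the full double integral over $\R^N\times\R^N$, discard the $D^c\times D^c$ piece using $u\equiv 0$ there, and bound the remainder by $2\int_D\int_{\R^N}\frac{|u(x)-u(y)|^p}{|x-y|^{N+sp}}\,dx\,dy$ via symmetry/Fubini; the paper writes the decomposition slightly differently but the content is identical.
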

\begin{proof}
	Since $D$ is bounded, we have $u\in L^p(D)$. Moreover,
	\begin{align*}
	\int_{\R^N}\int_{\R^N}& \frac{|u(x)-u(y)|^p}{|x-y|^{N+sp}}\ dxdy=\int_{D}\int_{\R^N} \frac{|u(x)-u(y)|^p}{|x-y|^{N+sp}}\ dxdy+ \int_{\R^N\setminus D}\int_{D} \frac{|u(x)|^p}{|x-y|^{N+sp}}\ dxdy\\
	&\leq \int_{D}\int_{\R^N} \frac{|u(x)-u(y)|^p}{|x-y|^{N+sp}}\ dxdy+ \int_{\R^N}\int_{D} \frac{|u(x)-u(y)|^p}{|x-y|^{N+sp}}\ dxdy=2\int_{D}\int_{\R^N} \frac{|u(x)-u(y)|^p}{|x-y|^{N+sp}}\ dxdy,
	\end{align*}
	which is bounded by assumption.
\end{proof}

An immediate consequence of Lemma \ref{restriction} and Lemma \ref{pos0} is
\begin{cor}\label{pos}
	Let $D$ be bounded and $u\in \tilde{W}^{s,p}(D)$ with $u\geq0$ on $\R^N\setminus D$ then $u^{-}\in \cW^{s,p}_0(D)$.
\end{cor}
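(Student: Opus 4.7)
The plan is essentially to chain together the two preceding results, Lemma \ref{pos0} and Lemma \ref{restriction}, since the statement is explicitly advertised as an "immediate consequence" of those two.

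First I would observe that the hypothesis $u \geq 0$ on $\R^N \setminus D$ translates directly into the pointwise identity $u^-(x) = 0$ for a.e.\ $x \in \R^N \setminus D$. This disposes of the vanishing-outside-$D$ condition needed for membership in $\cW^{s,p}_0(D)$.

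Second, I would invoke Lemma \ref{pos0}(ii) with $u \in \tilde{W}^{s,p}(D)$ to obtain $u^- \in \tilde{W}^{s,p}(D)$. In particular, $u^- \in L^p_{loc}(\R^N)$ and the Gagliardo-type integral $\int_{D}\int_{\R^N}\frac{|u^-(x)-u^-(y)|^p}{|x-y|^{N+sp}}\,dx\,dy$ is finite.

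Finally, since $D$ is bounded and $u^- \in \tilde{W}^{s,p}(D)$ vanishes on $\R^N \setminus D$, the hypotheses of Lemma \ref{restriction} are met for the function $u^-$, and we conclude $u^- \in \cW^{s,p}_0(D)$. There is no real obstacle here; the entire content of the corollary is just combining the vector-space/closure under $(\cdot)^\pm$ property with the fact that a $\tilde{W}^{s,p}(D)$-function vanishing outside a bounded $D$ automatically lies in $\cW^{s,p}_0(D)$.
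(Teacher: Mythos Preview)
Your proposal is correct and matches exactly the approach the paper intends: the corollary is stated in the paper as an immediate consequence of Lemma~\ref{pos0} and Lemma~\ref{restriction}, and you have simply spelled out that chain (use Lemma~\ref{pos0}(2) to get $u^-\in\tilde{W}^{s,p}(D)$, observe $u^-=0$ on $\R^N\setminus D$ from $u\geq0$ there, then apply Lemma~\ref{restriction}).
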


In the following, we also say that $v\in \tilde{W}^{s,p}(D)$ satisfies (in weak sense)
\[
(-\Delta)^s_p v \geq g \quad\text{ in $D$}
\]	
for $g\in [\cW^{s,p}_0(D)]'$, the dual of $\cW^{s,p}_0(D)$, if for all nonnegative $\varphi\in \cW^{s,p}_{0}(D)$ with compact support in $\R^N$ we have
\[
\langle u,\phi\rangle_{s,p}\geq \int_{\Omega} g(x) \varphi(x) \ dx.
\]
Similarly we use ``$\leq$'' and ``$=$''.

\begin{lemma}[cf. Lemma 2.9, \cite{IMS14}]\label{scaling}
	Let $t>0$, $u\in \tilde{W}^{s,p}(D)$ satisfy $(-\Delta)^{s}_pu=g$ in $D$ for some $g\in [\cW^{s,p}_0(D)]'$. Then the function $v:\R^N\to\R$, $v(x)=u(tx)$ satisfies $v\in \tilde{W}^{s,p}(t^{-1}D)$ and
	\[
	(-\Delta)^{s}_pv =t^{sp}g(t \cdot) \quad \text{ on $t^{-1}D$}
	\]
\end{lemma}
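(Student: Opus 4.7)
The plan is a direct change of variables $x=x'/t$, $y=y'/t$. Under this substitution distances rescale by $t^{-1}$ and volumes by $t^{-N}$, so the kernel $|x-y|^{-N-sp}$ picks up a factor $t^{N+sp}$ while $dx\,dy$ contributes $t^{-2N}$, producing an overall factor $t^{sp-N}$ on any such integral.

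First I verify $v\in\tilde{W}^{s,p}(t^{-1}D)$. Local $L^p$-integrability of $v$ is immediate from the same property of $u$ applied on a compact set. For the Gagliardo part, applying the substitution $x'=tx$, $y'=ty$ to
\[
\int_{t^{-1}D}\int_{\R^N}\frac{|v(x)-v(y)|^p}{|x-y|^{N+sp}}\,dx\,dy
\]
yields exactly $t^{sp-N}$ times the analogous integral for $u$ over $D\times\R^N$, which is finite by hypothesis.

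For the equation I fix an arbitrary test function $\phi\in\cW^{s,p}_0(t^{-1}D)$ with compact support and set $\psi:=\phi(\cdot/t)$. The same change of variables, applied this time to the full Gagliardo seminorm on $\R^N\times\R^N$, shows that $\phi\mapsto\phi(\cdot/t)$ is an isomorphism $\cW^{s,p}_0(t^{-1}D)\to\cW^{s,p}_0(D)$ and that $\supp\psi=t\cdot\supp\phi$ is compact in $D$, so $\psi$ is admissible in the weak formulation of $(-\Delta)^s_p u=g$ in $D$. Substituting $x'=tx$, $y'=ty$ inside the pairing \eqref{pairing} then gives
\[
\langle v,\phi\rangle_{s,p}=t^{sp-N}\langle u,\psi\rangle_{s,p}=t^{sp-N}g(\psi).
\]
This is exactly the weak formulation of $(-\Delta)^s_p v=t^{sp}g(t\cdot)$ in $t^{-1}D$, provided we interpret the rescaled right-hand side as the functional on $\cW^{s,p}_0(t^{-1}D)$ defined by $\phi\mapsto t^{sp-N}g(\phi(\cdot/t))$; when $g$ comes from a locally integrable function this identifies, via one further substitution in $\int_D g\,\psi\,dx$, with the pointwise expression $x\mapsto t^{sp}g(tx)$ on $t^{-1}D$.

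There is no real obstacle beyond bookkeeping the powers of $t$. For this reason I would compute the Gagliardo rescaling once as a template and then reuse the computation verbatim both for the asymmetric pairing $\langle\cdot,\cdot\rangle_{s,p}$ and for the verification that $\psi=\phi(\cdot/t)$ is a support-preserving isomorphism on the relevant function spaces.
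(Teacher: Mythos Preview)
Your proof is correct and follows essentially the same change-of-variables argument as the paper: set $\psi=\phi(\cdot/t)\in\cW^{s,p}_0(D)$, rescale $\langle v,\phi\rangle_{s,p}$ to $t^{sp-N}\langle u,\psi\rangle_{s,p}$, and then rescale the right-hand side. If anything, your write-up is slightly more careful than the paper's: you explicitly verify $v\in\tilde{W}^{s,p}(t^{-1}D)$ and spell out the dual-space interpretation of $t^{sp}g(t\cdot)$, both of which the paper leaves implicit.
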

\begin{proof}
Let $\phi\in \cW^{s,p}_0(t^{-1}D)$, then clearly $\phi(\cdot/t)\in  \cW^{s,p}_0(D)$ and
\begin{align*}
\langle v,\phi\rangle_{s,p}&=\int_{\R^N}\int_{\R^N}\frac{(u(x)-u(y))^{\ast(p-1)}(\phi(\frac{x}{t})-\phi(\frac{y}{t}))}{|x/t-y/t|^{N+sp}}\ dxdy\\
&=t^{-N+sp}\int_{\R^N}\int_{\R^N}\frac{(u(x)-u(y))^{\ast(p-1)}(\phi(\frac{x}{t})-\phi(\frac{y}{t}))}{|x-y|^{N+sp}} t^{-2N}\ dxdy\\
&= t^{-N+sp}\int_{D} g(x)\phi(x/t)\ dx=\int_{t^{-1}D} t^{sp}g(tx)\phi(x)\ dx.
\end{align*}	
\end{proof}

\section{Comparison principles}\label{comparison}

The following is a slight variant of the weak maximum principle presented in \cite[Proposition 2.10]{IMS14}, \cite[Lemma 6]{KKP16a}, and \cite[Lemma 9]{LL14}.

\begin{lemma}\label{wmp}
Let $D\subset \R^N$ be an open set, $q\in L^{\infty}(D)$, $q\geq 0$, and $g\in L^{p'}(D)$, where $p'=\frac{p}{p-1}$ . If $v,w\in \tilde{W}^{s,p}(D)$ are super- and subsolution of $(-\Delta)^s_p u+q(x)u^{\ast(p-1)} = g$ resp. such that $v\geq w$ in $\R^N\setminus D$ and 
\[
\liminf_{|x|\to \infty} (v(x)-w(x))\geq0.
\]
Then $v\geq w$ a.e. in $\R^N$.
\end{lemma}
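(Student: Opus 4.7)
Set $\psi := (w-v)^+$; by hypothesis $\psi\equiv 0$ on $\R^N\setminus D$, and by Lemma~\ref{pos0} $\psi\in\tilde{W}^{s,p}(D)$. The strategy is to subtract the weak inequalities for super- and subsolution, tested against a truncated version of $\psi$, and then to exploit the strict monotonicity of $t\mapsto t^{\ast(p-1)}$ both in the kernel pairing and in the $q$-term. Since $D$ need not be bounded I introduce Lipschitz cut-offs $\eta_R\in C^{0,1}_c(\R^N)$ with $\eta_R\equiv 1$ on $B_R(0)$, $\supp\eta_R\subset B_{2R}(0)$, $0\le\eta_R\le 1$, and $\|\nabla\eta_R\|_\infty\le 2/R$, and use $\varphi_R:=\eta_R\psi$ as test function; a short check shows $\varphi_R\in\cW^{s,p}_0(D)$ with compact support in $\R^N$. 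Subtracting the two weak inequalities tested against $\varphi_R$ yields
\[
\langle v,\varphi_R\rangle_{s,p} - \langle w,\varphi_R\rangle_{s,p} + \int_D q(x)\bigl(v^{\ast(p-1)} - w^{\ast(p-1)}\bigr)\varphi_R\,dx \;\ge\; 0.
\]

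Next I would split $\varphi_R(x)-\varphi_R(y) = \eta_R(x)(\psi(x)-\psi(y)) + \psi(y)(\eta_R(x)-\eta_R(y))$, treating the first summand as the principal part and the second as a commutator error. A four-case analysis on $(\psi(x),\psi(y))$, combined with the monotonicity of $t\mapsto t^{\ast(p-1)}$, shows that the principal kernel integrand is pointwise $\le 0$ and \emph{strictly} negative on the mixed set $\{\psi(x)>0,\;\psi(y)=0\}$ (and its mirror); the same monotonicity makes the $q$-integrand $\le 0$ on $\{\psi>0\}$. The commutator error is dominated by $|\eta_R(x)-\eta_R(y)|\le (2/R)|x-y|\wedge 2$ and H\"older's inequality against the $\tilde{W}^{s,p}(D)$-seminorms of $v$ and $w$; the hypothesis $\liminf_{|x|\to\infty}(v-w)\ge 0$ ensures that $\psi(y)$ is arbitrarily small outside a large ball, so this error tends to $0$ as $R\to\infty$.

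Passing to the limit $R\to\infty$ in the displayed inequality then forces the nonpositive principal kernel integral to equal zero, so its integrand vanishes a.e. The strict negativity in the mixed case yields $|\{\psi>0\}|\cdot|\{\psi=0\}|=0$; since $\psi\equiv 0$ on $\R^N\setminus D$ (or, in the special case $D=\R^N$, since the only remaining alternative forces $\psi$ to be a.e.\ constant, whose sole value compatible with the liminf hypothesis is $0$), we obtain $|\{w>v\}|=0$, as claimed. The principal obstacle I expect is precisely the commutator/tail estimate: $v-w$ need not lie in any global $W^{s,p}$-space, so the terms involving $\eta_R(x)-\eta_R(y)$ must be absorbed using \emph{only} the one-sided integrability encoded in $\tilde{W}^{s,p}(D)$ combined with the liminf decay of $v-w$ at infinity, which is exactly why the latter assumption appears in the statement.
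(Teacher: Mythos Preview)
Your core idea---testing with the truncation $\psi=(w-v)^+=u^-$ and exploiting that the integrand $A(x,y)\bigl(\psi(x)-\psi(y)\bigr)$ is pointwise nonpositive---is exactly what the paper does in the \emph{bounded} case (there the cut-off is unnecessary since $u^-\in\cW^{s,p}_0(D)$ by Corollary~\ref{pos}). So for bounded $D$ your argument and the paper's coincide.

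The gap is in your treatment of unbounded $D$. You claim the commutator
\[
E_R=\int_{\R^N}\!\!\int_{\R^N}\frac{\bigl[(v(x)-v(y))^{\ast(p-1)}-(w(x)-w(y))^{\ast(p-1)}\bigr]\psi(y)\bigl(\eta_R(x)-\eta_R(y)\bigr)}{|x-y|^{N+sp}}\,dx\,dy
\]
tends to $0$ because $\psi$ is small outside a large ball. But the H\"older splitting you propose produces, on one side, a factor controlled by the $\tilde W^{s,p}(D)$-seminorms of $v,w$ (uniformly in $R$), and on the other the quantity
\[
\Bigl(\int\!\!\int\frac{|\psi(y)|^p|\eta_R(x)-\eta_R(y)|^p}{|x-y|^{N+sp}}\,dx\,dy\Bigr)^{1/p}.
\]
Even after using $\psi(y)<\epsilon$ for $|y|>R_0$, the remaining seminorm of $\eta_R$ scales like $R^{(N-sp)/p}$; for $N>sp$ this diverges, and the liminf hypothesis gives no quantitative decay rate on $\psi$ to compensate. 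On the complementary region $|y|\le R_0$ you would need $\int_{B_{R_0}}|\psi|^p$ to stay bounded as $R_0\to\infty$, which is not guaranteed since $v,w$ are only in $L^p_{loc}$. In short, ``$\psi$ small at infinity'' is too weak to kill a commutator whose cut-off seminorm grows polynomially in $R$.

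The paper avoids this entirely: it first proves the bounded case with $\varphi=u^-$ (no cut-off), and then for unbounded $D$ replaces $v$ by $v_\epsilon:=v+\epsilon$. The liminf hypothesis yields $R$ with $v_\epsilon\ge w$ on $\R^N\setminus(D\cap B_R)$; since $v_\epsilon$ is still a supersolution (here $q\ge 0$ is used), the bounded result on $D_R:=D\cap B_R$ gives $v+\epsilon\ge w$ everywhere, and one lets $\epsilon\to 0$. This $\epsilon$-shift is the standard way to convert the qualitative liminf condition into a reduction to bounded domains, and it sidesteps any commutator estimate. If you want to salvage your route, the natural fix is to test with $(\psi-\epsilon)^+$ (which has compact support by the liminf hypothesis) rather than $\eta_R\psi$; but that is essentially the paper's argument in disguise.
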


\begin{proof}
 First assume $D$ is bounded and denote $u(x)=v(x)-w(x)$,
	\[
	Q(x,y):=(p-1)\int_0^1 |w(x)-w(y) +t(u(x)-u(y))|^{p-2}\ dt,\quad P(x)=(p-1)\int_0^1 |w(x)+tu(x)|^{p-2}\ dt.
	\]
	Note that $P\geq 0$ on $\R^N$ and $Q(x,y)=Q(y,x)\geq0$ for $(x,y)\in\R^N\times \R^N$. Moreover, if $P(x)=0$, then $w(x)=0=u(x)$ and if $Q(x,y)=0$, then $w(x)=w(y)$ and $u(x)=u(y)$. Note that $u\geq0$ on $\R^N\setminus D$ and hence $u^-\in  \cW^{s,p}_0(D)$ due to Corollary \ref{pos}. Note that for any $\phi\in \cW^{s,p}_0(D)$, $\phi\geq 0$ we have
	\begin{align*}
-\int_D q(x)P(x) u(x)\phi(x)\ dx&= -\int_Dq(x) v^{\ast(p-1)}(x)\phi(x)+ q(x)w^{\ast(p-1)}(x)\phi(x)\ dx\leq \langle v,\phi\rangle_{s,p}-\langle w,\phi\rangle_{s,p}\\
&= \int_{\R^N}\int_{\R^N} \frac{[(v(x)-v(y))^{\ast(p-1)}-(w(x)-w(y))^{\ast(p-1)}](\phi(x)-\phi(y))}{|x-y|^{N+sp}}\ dxdy\\
&=\int_{\R^N}\int_{\R^N} \frac{Q(x,y)}{|x-y|^{N+sp}} (u(x)-u(y))(\phi(x)-\phi(y))\ dxdy.
	\end{align*}
	 Since $D$ is bounded, we have $u^-\in \cW^{s,p}_0(D)$ and hence since
	\[
	[u(x)-u(y)][u^-(x)-u^-(y)]=-2u^+(x)u^-(y) -(u^-(x)-u^-(y))^2\leq 0
	\]
	for $x,y\in \R^N$, we have with $\phi=u^-$ 
	\begin{align*}
	-\|q^-\|_{L^{\infty}(D)}\int_{D}u^-(x)^2 P(x)\ dx&\leq -\int_{\R^N}\int_{\R^N} \frac{Q(x,y)}{|x-y|^{N+sp}} (u^-(x)-u^-(y))^2\ dxdy\\
	&\leq -\int_D u^-(x)^2 \int_{\R^N\setminus D} \frac{2 Q(x,y)}{|x-y|^{N+sp}} \ dy\ dx\leq 0.
	\end{align*}
	Since $q\geq 0$ this implies
	\[
	0=\int_D u^-(x)^2 \int_{\R^N\setminus D}\frac{Q(x,y)}{|x-y|^{N+sp}}  \ dy\ dx,
	\]
	which by an argumentation as in \cite[Lemma 9]{LL14} is only possible if $u^-=0$ a.e.; indeed, we have for a.e. $(x,y)\in D\times (\R^N\setminus D)$ either $u^-(x)=0$ or $Q(x,y)=0$, but in the latter case, we have as mentioned above $u(x)=u(y)$. Since $x\in \supp\ u^-$, $y\in \R^N\setminus D$ we have $u(x)\leq 0\leq u(y)$, so that $u(x)=0=u(y)$. Hence also in the latter case it follows that $u^-(x)=0$. Hence the claim follows for bounded sets.\\
	If $D$ is unbounded, then since 
	\[
	\liminf_{|x|\to \infty} (v(x)-w(x))\geq0.
	\]
	we find for every $\epsilon>0$ a number $R>0$ such that $v_{\epsilon}\geq w$ in $\R^N\setminus D_R$, where $v_{\epsilon}:=v+\epsilon$ and $D_R:=B_R(0)\cap D$. Moreover, for $\phi\in \cW^{s,p}_0(D)$, $\phi\geq0$ with compact support in $\R^N$, using $q\geq0$ and $t\mapsto t^{\ast(p-1)}$ is increasing, we have
\begin{align*}
\langle &v_{\epsilon},\phi\rangle_{s,p}+\int_D q(x)v_{\epsilon}^{\ast(p-1)}(x)\phi(x)\ dx\\
&\geq \int_{\R^N}\int_{\R^N}\frac{(v(x)+\epsilon-(v(y)+\epsilon))^{\ast(p-1)}(\phi(x)-\phi(y))}{|x-y|^{N+sp}}\ dxdy +\int_D q(x)v^{\ast(p-1)}(x)\phi(x)\ dx\\
&=\langle v,\phi\rangle_{s,p} +\int_Dq(x)v^{\ast(p-1)}(x)\phi(x)\ dx\geq \int_D g(x)\phi(x)\ dx,
\end{align*}
	hence $v_{\epsilon}$ is a supersolution of $(-\Delta)^s_p u+q(x)u^{\ast(p-1)} = g$ in $D_R$ while $w$ is a subsolution of this equation in $D_R$. The first part gives $v+\epsilon=v_{\epsilon}\geq w$ in $\R^N$ and since $\epsilon>0$ is arbitrary this finishes the proof.
%
	
\end{proof}

\begin{remark}
	We note that almost with the same proof it is possible to show the following: Let $D\subset \R^N$ be an open bounded set, $q\in L^{\infty}(D)$, $q\geq 0$, and assume $v,w\in \tilde{W}^{s,p}(D)$ satisfy in weak sense
	\[
	\begin{aligned}
	(-\Delta)^s_p v -(-\Delta)^{s}_pw &\geq -q(x)(v-w)^{\ast(p-1)} &&\text{ in $D$}\\
	v&\geq w&&\text{ in $\R^N\setminus D$.}
	\end{aligned}
	\]
	Then $v\geq w$ a.e. in $\R^N$.
\end{remark}

\begin{lemma}[see Lemma 2.8, \cite{IMS14}]\label{nonlocalchar0}
  Let $D\subset \R^N$ be an open bounded set, $K\subset\subset\R^N\setminus D$ and let $h\in L^1_{loc}(\R^N)$. Then for any $v\in \tilde{W}^{s,p}(D)$ we have in weak sense
\[
 (-\Delta)^s_p(v+h1_{K})= (-\Delta)^s_pv+H \quad\text{ in $D$,}
\]
with
\[
 H(x)=2 \int_{K}\frac{((v(x)-v(y))- h(y))^{\ast (p-1)}-(v(x)-v(y))^{\ast (p-1)}}{|x-y|^{N+sp}}\ dy
\]
for a.e. $x\in D$.
\end{lemma}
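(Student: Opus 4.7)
My plan is to test the claimed identity against an arbitrary nonnegative $\phi \in \cW^{s,p}_0(D)$ with compact support and reduce everything to a direct expansion of the pairing. Set $u := v + h\,1_K$ and split $\R^N = D \cup K \cup R$, where $R := (D \cup K)^c$. The crucial geometric input is the assumption $K \subset\subset \R^N \setminus D$: combined with $\supp \phi \subset D$, this yields a uniform separation $\delta := \dist(K, \supp\phi) > 0$, which will make every block-integral arising below absolutely convergent after bounding the kernel $|x-y|^{-(N+sp)}$ by $\delta^{-(N+sp)}$ on the relevant region.

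Since $\phi$ vanishes outside $D$, the integrand in $\langle u,\phi\rangle_{s,p}$ (see \eqref{pairing}) is identically zero on each of the four blocks $K\times K$, $K\times R$, $R\times K$, $R\times R$. The kernel being symmetric under the swap $(x,y) \mapsto (y,x)$, the off-diagonal blocks pair up, so the pairing collapses to
\[
\langle u,\phi\rangle_{s,p} = I_{DD}(u) + 2\,I_{DK}(u) + 2\,I_{DR}(u),
\]
where
\[
I_{AB}(f) := \int_A \int_B \frac{(f(x)-f(y))^{\ast(p-1)}(\phi(x)-\phi(y))}{|x-y|^{N+sp}}\, dy\, dx.
\]
Substituting the definition of $u$: on $D$ and on $R$ we have $u=v$, while on $K$ we have $u = v+h$; moreover $\phi(y)=0$ for $y \in K$. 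Hence $I_{DD}(u)=I_{DD}(v)$, $I_{DR}(u)=I_{DR}(v)$, and
\[
I_{DK}(u) = \int_D \int_K \frac{(v(x)-v(y)-h(y))^{\ast(p-1)}\,\phi(x)}{|x-y|^{N+sp}}\, dy\, dx.
\]

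Performing the same decomposition on $\langle v,\phi\rangle_{s,p}$ (the only change in the $K$-block being that $h$ is absent), the $I_{DD}$ and $I_{DR}$ contributions cancel upon subtraction, leaving
\[
\langle u,\phi\rangle_{s,p} - \langle v,\phi\rangle_{s,p}
= 2\int_D \int_K \frac{(v(x)-v(y)-h(y))^{\ast(p-1)} - (v(x)-v(y))^{\ast(p-1)}}{|x-y|^{N+sp}}\, \phi(x)\, dy\, dx,
\]
which is precisely $\int_D H(x)\phi(x)\, dx$ with $H$ as stated in the lemma.

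The only nontrivial issue is legitimacy of Fubini and the finiteness of the $D\times K$ integral handled in isolation. This is where the bound $|x-y|^{-(N+sp)} \le \delta^{-(N+sp)}$ on $\supp\phi \times K$ is essential: together with $v \in L^p_{\mathrm{loc}}(\R^N)$ (from $v \in \tilde W^{s,p}(D)$), local integrability of $h$ on the compact set $K$, and the elementary inequality \eqref{pgeq2klein} applied to $|v(x)-v(y)-h(y)|^{p-1}$, the integrand on $\supp\phi \times K$ is dominated by an integrable function on a bounded product set. With that justified, Lemma~\ref{well-defined} handles the $D\times D$ and $D\times R$ blocks, and the whole manipulation above is just bookkeeping. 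I expect this integrability check, although routine, to be the main technical obstacle, since it is the only place where the separation hypothesis enters quantitatively.
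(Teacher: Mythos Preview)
The paper does not supply its own proof here; the lemma is quoted from \cite[Lemma~2.8]{IMS14}. Your argument is the standard direct verification and is correct: decompose the double integral into blocks according to $\R^N=D\cup K\cup R$, use that $\phi$ vanishes off $D$ and $h1_K$ vanishes off $K$ so that only the $D\times K$ and $K\times D$ blocks are affected, and exploit the positive separation between $K$ and $\supp\phi$ to bound the kernel there by $\delta^{-(N+sp)}$.

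One minor caveat on the integrability step you single out at the end. Expanding $|v(x)-v(y)-h(y)|^{p-1}$ via \eqref{pgeq2klein} produces a term $|h(y)|^{p-1}$, and for $p>2$ the stated hypothesis $h\in L^1_{loc}$ does not by itself control $\int_K |h|^{p-1}\,dy$; the same obstruction appears if you try to bound the difference $I_{DK}(u)-I_{DK}(v)$ directly. This is an imprecision in the lemma's hypotheses rather than a flaw in your method: in the only place the paper uses this lemma (Lemma~\ref{wahlvondelta}) the function $h$ is the constant $\pm\delta$, so the issue never arises. If you want the argument to go through for general $p$, assume $h\in L^{p-1}_{loc}$ when $p>2$.
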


\begin{lemma}\label{wahlvondelta}
 Let $D \subset \R^N$ be an open bounded set and $K\subset\subset\R^N\setminus D$ with $|K|>0$. 
\begin{enumerate}
 \item If $p\geq 2$ then there is $C>0$ such that for any $v\in \tilde{W}^{s,p}(D)$ and any $\delta\in(0,1]$ we have in weak sense
\begin{equation}\label{nonlocalchar}
\begin{aligned}
 (-\Delta)^s_p(v-\delta 1_K)&\geq (-\Delta)^s_p v + C\delta^{p-1}\;\text{ and } \\
(-\Delta)^s_p(v+\delta 1_K)&\leq (-\Delta)^s_p v - C\delta^{p-1}.
\end{aligned}
\end{equation}
\item If $p\in(1,2)$ then for any $M>0$ there is $C>0$ such that for any $v\in \tilde{W}^{s,p}(D)$ with $\|v\|_{L^{\infty}(\R^N)}\leq M$ and any $\delta\in(0,1]$ we have in weak sense
\begin{equation}\label{nonlocalchar2}
\begin{aligned}
(-\Delta)^s_p(v-\delta 1_K)&\geq (-\Delta)^s_p v + C\delta\;\text{ and } \\
(-\Delta)^s_p(v+\delta 1_K)&\leq (-\Delta)^s_p v - C\delta.
\end{aligned}
\end{equation}
\end{enumerate}
\end{lemma}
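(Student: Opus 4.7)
The plan is to invoke Lemma \ref{nonlocalchar0} with $h\equiv \mp\delta$ on $K$, which rewrites, in weak sense,
\[
(-\Delta)^s_p(v\mp\delta 1_K)=(-\Delta)^s_p v+H_\mp,
\]
where
\[
H_\mp(x)=2\int_K\frac{\bigl((v(x)-v(y))\pm\delta\bigr)^{\ast(p-1)}-(v(x)-v(y))^{\ast(p-1)}}{|x-y|^{N+sp}}\,dy
\]
for a.e.\ $x\in D$. The task then reduces to a uniform pointwise lower bound on the integrand together with a uniform lower bound on $\int_K|x-y|^{-N-sp}\,dy$. For the latter, since $D$ and $K$ are both bounded, $R:=\sup\{|x-y|:x\in D,\,y\in K\}$ is finite, so $\int_K|x-y|^{-N-sp}\,dy\geq|K|R^{-N-sp}=:c_0>0$ uniformly in $x\in D$.

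For part (1) with $p\geq 2$, I would apply inequality \eqref{pgeq2} with exponent $p-1\geq 1$, $a=v(x)-v(y)$, and $b=\delta$ to obtain the pointwise estimate
\[
\bigl((v(x)-v(y))+\delta\bigr)^{\ast(p-1)}-(v(x)-v(y))^{\ast(p-1)}\geq 2^{2-p}\delta^{p-1}.
\]
This yields $H_-\geq 2^{3-p}c_0\delta^{p-1}=:C\delta^{p-1}$, which is the first line of \eqref{nonlocalchar}. For the second line I treat $-H_+$ by the substitution $\tilde a:=v(x)-v(y)-\delta$ and apply the same inequality to $\tilde a$, yielding the symmetric bound $-H_+\geq C\delta^{p-1}$.

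For part (2) with $p\in(1,2)$, the hypothesis $\|v\|_{L^\infty(\R^N)}\leq M$ implies $v(x)-v(y)\in[-2M,2M]$, so I may apply \eqref{pin01} (with the constant $C_{2M,2}$) to get
\[
\bigl((v(x)-v(y))+\delta\bigr)^{\ast(p-1)}-(v(x)-v(y))^{\ast(p-1)}\geq C_{2M,2}\min\{\delta,\delta^{p-1}\}=C_{2M,2}\delta,
\]
where the last equality uses $\delta\in(0,1]$ and $p-1<1$, hence $\delta^{p-1}\geq\delta$. Combining with $c_0$ yields $H_-\geq C\delta$ and, via the same substitution $\tilde a=v(x)-v(y)-\delta$, the symmetric bound $H_+\leq -C\delta$, with $C$ depending on $M$.

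I do not anticipate a serious obstacle. The only point requiring care is the role of the $L^\infty$-hypothesis in case (2): without it, $v(x)-v(y)$ is unbounded and the constant in \eqref{pin01} cannot be chosen uniformly in $(x,y)$, whereas in case (1) the inequality \eqref{pgeq2} holds for arbitrary $a\in\R$ so no such hypothesis is required.
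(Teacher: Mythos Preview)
Your proposal is correct and follows essentially the same route as the paper: invoke Lemma~\ref{nonlocalchar0}, bound the integrand pointwise via \eqref{pgeq2} when $p\geq 2$ and via \eqref{pin01} when $p\in(1,2)$, combine with the uniform lower bound $\int_K|x-y|^{-N-sp}\,dy\geq |K|R^{-N-sp}$, and handle the ``$+\delta 1_K$'' inequality by the shift $\tilde a=v(x)-v(y)-\delta$. If anything, you are slightly more careful than the paper in two places: you use $C_{2M,2}$ rather than $C_{M,2}$ (since $|v(x)-v(y)|\leq 2M$), and you make explicit why $\min\{\delta,\delta^{p-1}\}=\delta$ for $p<2$ and $\delta\in(0,1]$.
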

\begin{proof}
 By Lemma \ref{nonlocalchar0} we have in weak sense
\[
 (-\Delta)^s_p(v-\delta 1_K)=(-\Delta)^s_pv + 2\int_{K}\frac{(v(x)-v(y) +\delta)^{\ast (p-1)}-(v(x)-v(y))^{\ast (p-1)}}{|x-y|^{N+sp}}\ dy.
\]
We will start by showing the first inequality in \eqref{nonlocalchar} and \eqref{nonlocalchar2}.\\
Case 1: $p\geq2$. Then by inequality (\ref{pgeq2}) we have
\[
 2\int_{K}\frac{(v(x)-v(y) +\delta)^{\ast (p-1)}-(v(x)-v(y))^{\ast (p-1)}}{|x-y|^{N+sp}}\ dy\geq 2^{3-p} \delta^{\ast (p-1)}\int_{K} \frac{1}{|x-y|^{N+sp}}\ dy.
\]
Hence the first inequality in \eqref{nonlocalchar} holds with $C_1= 2^{3-p}|K| \sup_{x\in D,\ y\in K} |x-y|^{-N-sp}$.\\
Case 2: $p\in(1,2)$. Then with \eqref{pin01} we have
\[
 2\int_{K}\frac{(v(x)-v(y) +\delta)^{\ast (p-1)}-(v(x)-v(y))^{\ast (p-1)}}{|x-y|^{N+sp}}\ dy\geq C_{M,2}\delta \int_{K} \frac{1}{|x-y|^{N+sp}}\ dy.
\]
Hence the first inequality in \eqref{nonlocalchar2} holds with $C_2=C_{M,2}|K| \sup_{x\in D,\ y\in K} |x-y|^{-N-sp}$.\\
For the second inequalities in \eqref{nonlocalchar} and \eqref{nonlocalchar2} note that
\begin{align*}
 (-\Delta)^s_p(v-\delta 1_K)&=(-\Delta)^s_pv + 2\int_{K}\frac{(v(x)-v(y) -\delta)^{\ast (p-1)}-(v(x)-v(y))^{\ast (p-1)}}{|x-y|^{N+sp}}\ dy\\
&=(-\Delta)^s_pv - 2\int_{K}\frac{(v(x)-v(y) -\delta+\delta)^{\ast (p-1)}-(v(x)-v(y)-\delta)^{\ast (p-1)}}{|x-y|^{N+sp}}\ dy.
\end{align*}
Hence this part follows similarly.
\end{proof}


\begin{lemma}\label{vglvf}
 Let $D\subset \R^N$ be an open bounded set and let $s\in(0,1)$, $p>1$, and $f\in C^2_c(D)$.
\begin{enumerate}
 \item If $\frac{1}{1-s}<p\leq2$, then for any $v\in \tilde{W}^{s,p}(D)$ there is $C>0$ such that for all $a\in[-1,1]$ in weak sense
\begin{equation}\label{local}
 |(-\Delta)^s_p (v-af)- (-\Delta)^s_p v |\leq  |a|^{p-1} C \quad\text{ in $\supp\ f$.}
\end{equation}
\item If $p\geq 2$, then for any $v\in \tilde{W}^{s,p}(D)\cap  C^{\alpha}_{loc}(D)\cap L^{\infty}(\R^N)$, $\alpha\in(0,1]$ with $\alpha(p-2)>sp-1$ there is $C>0$ such that for all $a\in[-1,1]$ in weak sense.
\begin{equation}\label{local2}
|(-\Delta)^s_p (v-af)- (-\Delta)^s_p v |\leq  |a|C \quad\text{ in $\supp\ f$.}
\end{equation}
\end{enumerate}
\end{lemma}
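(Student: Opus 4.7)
My plan is to reduce both inequalities to a uniform pointwise bound on the formal kernel
$$\cI(x):=\int_{\R^{N}}\frac{(v(x)-v(y)-a(f(x)-f(y)))^{\ast(p-1)}-(v(x)-v(y))^{\ast(p-1)}}{|x-y|^{N+sp}}\,dy,\qquad x\in\supp f.$$
The bracket in the numerator is antisymmetric in $(x,y)$, so exactly as in the proof of Lemma \ref{nonlocalchar0} (Fubini plus a change of variables) one obtains
$$\langle v-af,\phi\rangle_{s,p}-\langle v,\phi\rangle_{s,p}=2\int_{\R^{N}}\phi(x)\,\cI(x)\,dx$$
for every nonnegative $\phi\in\cW^{s,p}_{0}(D)$ with compact support in $\supp f$. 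Hence a uniform estimate $|\cI(x)|\le C|a|^{p-1}$ (respectively $|\cI(x)|\le C|a|$) on $\supp f$ immediately produces the weak inequality \eqref{local} (respectively \eqref{local2}). Since $f$ is bounded and compactly supported, the tail $\{|x-y|>1\}$ contributes a uniformly bounded integral via $|f(x)-f(y)|\le 2\|f\|_{\infty}$ and $|x-y|^{-N-sp}$, so the substantive work is near the diagonal.

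For case (1), with $p-1\in(0,1]$, I apply \eqref{p01speziell} directly to get the pointwise bound
$$\bigl|(v(x)-v(y)-a(f(x)-f(y)))^{\ast(p-1)}-(v(x)-v(y))^{\ast(p-1)}\bigr|\le C|a|^{p-1}|f(x)-f(y)|^{p-1}.$$
Note that no $L^{\infty}$-bound on $v$ is needed here. On $\{|x-y|<1\}$ the Lipschitz estimate $|f(x)-f(y)|\le\|\nabla f\|_{\infty}|x-y|$ makes the integrand of order $|a|^{p-1}|x-y|^{p-1-N-sp}$, and this is locally integrable in $y$ precisely when $p(1-s)>1$, i.e.\ under the hypothesis $p>\frac{1}{1-s}$.

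For case (2), $p\ge 2$, I use the fundamental theorem of calculus to write
$$(\tau-\beta)^{\ast(p-1)}-\tau^{\ast(p-1)}=-(p-1)\beta\int_{0}^{1}|\tau-t\beta|^{p-2}\,dt,\qquad \tau=v(x)-v(y),\ \beta=a(f(x)-f(y)),$$
and then $|\tau-t\beta|^{p-2}\le(|\tau|+|\beta|)^{p-2}$ yields
$$\bigl|(\tau-\beta)^{\ast(p-1)}-\tau^{\ast(p-1)}\bigr|\le C|a|\,(|\tau|+|\beta|)^{p-2}\,|f(x)-f(y)|.$$
Fixing a compact neighborhood $K\subset\subset D$ of $\supp f$, the hypotheses $v\in C^{\alpha}_{loc}(D)\cap L^{\infty}(\R^{N})$ together with $\alpha\le 1$ give $|\tau|\le C|x-y|^{\alpha}$ and $|\beta|\le C|a||x-y|\le C|x-y|^{\alpha}$ for $x\in\supp f,\ y\in K$, so $(|\tau|+|\beta|)^{p-2}\le C|x-y|^{\alpha(p-2)}$. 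Combined with $|f(x)-f(y)|\le C|x-y|$, the near-diagonal integrand is dominated by $C|a||x-y|^{\alpha(p-2)+1-N-sp}$, which is locally integrable exactly under the stated assumption $\alpha(p-2)>sp-1$. For $y\notin K$ the quantities $|\tau|,|\beta|$ are globally bounded while $|x-y|$ is bounded away from zero, yielding a uniformly bounded contribution.

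The main subtlety is the sharp diagonal estimate in case (2): a naive Lipschitz estimate for $t\mapsto t^{\ast(p-1)}$ coming from \eqref{pgen} alone only gives the singular factor $|x-y|^{1-N-sp}$ and would force the much stronger restriction $sp<1$. It is essential to exploit the H\"older decay of $v$ and the Lipschitz decay of $f$ simultaneously through the $(|\tau|+|\beta|)^{p-2}$-factor in order to recover the optimal exponent $\alpha(p-2)>sp-1$ that appears in the statement.
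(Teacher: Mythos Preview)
Your argument is correct and, for part (1), essentially identical to the paper's: both apply \eqref{p01speziell} directly, reduce via antisymmetry to $2\int\phi(x)\cI(x)\,dx$, and use the Lipschitz bound on $f$ together with the integrability condition $p(1-s)>1$.

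For part (2) your route is genuinely simpler than the paper's. The paper also writes the bracket as $-a\,Q(x,y)(f(x)-f(y))$ with $Q(x,y)=(p-1)\int_0^1|\,v(x)-v(y)-at(f(x)-f(y))\,|^{p-2}dt$, but then treats the inner integral as a principal value and symmetrizes in $y\mapsto -y$, using the $C^2$-estimate $|2f(x)-f(x+y)-f(x-y)|\le K|y|^{2}$ and a separate bound on $|Q(x,x+y)-Q(x,x-y)|\le K_2|y|^{\alpha(p-2)}$. This produces two near-diagonal terms with exponents $2+\alpha(p-2)-N-sp$ and $1+\alpha(p-2)-N-sp$, and it is the second one that forces $\alpha(p-2)>sp-1$. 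Your direct bound $(|\tau|+|\beta|)^{p-2}\le C|x-y|^{\alpha(p-2)}$ near the diagonal shows that the inner integral is in fact absolutely convergent, so no principal-value symmetrization is needed; you arrive at exactly the same critical exponent $1+\alpha(p-2)-N-sp$ with less work, and you only use $f\in C^{1}_c$ rather than $C^{2}_c$. The paper's symmetrization trick (adapted from \cite{JW17}) is more than what is needed here; your observation that the H\"older decay of $v$ and the Lipschitz decay of $f$ combine through the factor $(|\tau|+|\beta|)^{p-2}$ is precisely the point, and it yields the sharp condition without the second-difference machinery.
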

 \begin{proof}
Let $s\in(0,1)$, $a\in[-1,1]$, fix $f\in C^{2}_c(D)$, $R=2\diam(D)+1$, $U:=\supp\ f$, and $K:=\sum_{|\alpha|\leq 2} \|\partial^{\alpha}f\|_{L^{\infty}(\R^N)}$. Moreover, let $v\in  \tilde{W}^{s,p}(D)$. If $p\in(\frac{1}{1-s},2]$, then we have for any $\phi\in \cW^{s,p}_0(U)$, $\varphi\geq0$, and a constant $C_1>0$ given by \eqref{p01speziell}
\begin{align*}
 &\left|\ \int_{\R^{N}}\int_{\R^{N}}\frac{\left((v(x)-v(y)-a(f(x)-f(y)))^{\ast (p-1)}-(v(x)-v(y))^{\ast (p-1)}\right)(\varphi(x)-\varphi(y))}{|x-y|^{N+sp}}\ dxdy\right|\\
&\leq 2C_1|a|^{p-1}\ \int_{\R^{N}}\varphi(x)\int_{\R^{N}}\frac{|f(x)-f(x+y)|^{p-1}}{|y|^{N+sp}}\ dxdy\\
&\leq 2C_1|a|^{p-1}K^{p-1}\int_{U}\varphi(x)\left(\int_{B_R(0)}|y|^{-N+(1-s)p-1}\ dy+ \int_{\R^{N}\setminus B_{R}(0)}|y|^{-N-sp}\ dy\right)\ dx\\
&\leq 2C_1N\omega_N|a|^{p-1}K^{p-1}\int_{U}\varphi(x)\left(\int_{0}^Rr^{(1-s)p-2}\ dr+ \int_{R}^\infty r^{-1-sp}\ dr\right)\ dx\\
&\leq 2|a|^{p-1}C_1K^{p-1}N\omega_N\left(\frac{1}{(1-s)p-1}+\frac{2}{sp}\right)\int_{U}\varphi(x)\ dx,
\end{align*}
where $\omega_N=|B_1(0)|$. Hence (\ref{local}) holds with $C=2C_1K^{p-1}N\omega_N\left(\frac{1}{(1-s)p-1}+\frac{2}{sp}\right)$.\\

Next let $p\geq 2$, assume additionally that $v\in L^{\infty}(\R^N)\cap C^{\alpha}_{loc}(D)$ for some $\alpha\in(0,1)$ with $\alpha(p-2)>sp-1$ and denote for $x,y\in \R^N$
\[
 Q(x,y):= (p-1)\int_{0}^{1} |v(x)-v(y)-at (f(x)-f(y))|^{p-2}\ dt.
\]
Note that $Q(x,y)=Q(y,x)\geq 0$ for any $x,y\in \R^N$. Moreover, there is $C_2=C_2(p,\|v\|_{L^{\infty}(\R^N)},f)$ and $C_3=C_3(p,\|v\|_{C^{s+\epsilon}(D)},f)$ and such that
\begin{align}
Q(x,y)&\leq C_2 &&\text{ for $x,y\in \R^N$}\label{Q-bound1}\\
Q(x,y)&\leq C_3|x-y|^{\alpha(p-2)} &&\text{ for $x,y\in U$,}\label{Q-bound2}
\end{align}
where we used that $\overline{U}\subset D$.
Moreover, we have $\dist(\supp\ f,D^c)=\delta>0$. Fix $\varphi \in \cW^{s,p}_{0}(U)$, $\varphi\geq0$. Then, since 
\[
 (v(x)-v(y)-a(f(x)-f(y)))^{\ast (p-1)}-(v(x)-v(y))^{\ast (p-1)}=-a Q(x,y)(f(x)-f(y)),
\]
we have
\begin{align*}
\Bigg|\ \int_{\R^{N}}\int_{\R^{N}}&\frac{\left((v(x)-v(y)-a(f(x)-f(y)))^{\ast (p-1)}-(v(x)-v(y))^{\ast (p-1)}\right)(\varphi(x)-\varphi(y))}{|x-y|^{N+sp}}\ dxdy\Bigg|\notag\\
&= |a|\left|\ \int_{\R^{N}}\int_{\R^{N}}\frac{(f(x)-f(y))(\varphi(x)-\varphi(y))}{|x-y|^{N+sp}}Q(x,y)\ dxdy\right|\notag\\
&\leq 2|a| \int_U \phi(x) \Bigg|\ \lim_{\epsilon\to 0}\int_{B^c_{\epsilon}(x)} \frac{(f(x)-f(y)) Q(x,y)}{|x-y|^{N+sp}}dy \Bigg| \ dx.
\end{align*}
We now follow closely the lines of proof \cite[Lemma 3.8]{JW17} to show that
\begin{equation}\label{bound-gen}
\sup_{x\in D}\Bigg|\lim_{\epsilon\to 0}\ \int_{ B^c_{\epsilon}(x)} \frac{(f(x)-f(y)) Q(x,y)}{|x-y|^{N+sp}}dy \Bigg| \leq C_4
\end{equation}
for a constant $C_4=C_4(D,N,p,f,\|v\|_{C^{\alpha}(D)},\|v\|_{L^{\infty}(\R^N)})>0$. Clearly, once \eqref{bound-gen} is shown, this finishes the proof of \eqref{local2}. To see \eqref{bound-gen}, fix $x\in U$ and note that $B_{\delta}(x)\subset D$. Let $\epsilon\in(0,\delta)$. Moreover, since $f\in C^2_c(D)$ we have
\[
|2f(x)-f(x-y)-f(x+y)|\leq K|y|^2 \qquad\text{ for all $x,y\in \R^N$}
\] 
and from \eqref{Q-bound2} and \eqref{pgeq2klein} there is $K_{2}>0$ such that
\[
|Q(x,x-y)-Q(x,x+y)|\leq K_2|y|^{\alpha(p-2)} \qquad\text{ for all $x\in \supp\ f$, $y\in B_{\delta}(0)$.}
\]
Hence with \eqref{Q-bound1} and \eqref{Q-bound2}
\begin{align*}
 \Bigg|\int_{ B^c_{\epsilon}(x)} &\frac{(f(x)-f(y)) Q(x,y)}{|x-y|^{N+sp}}dy\Bigg|=\Bigg| \int_{ B^c_{\epsilon}(0)} \frac{(f(x)-f(x\pm y)) Q(x,x\pm y)}{|y|^{N+sp}}dy\Bigg|\\
&= \Bigg|\frac{1}{2}\int_{ B^c_{\epsilon}(0)} \frac{(2f(x)-f(x+y)-f(x-y)) Q(x,x+y)}{|y|^{N+sp}}dy\\
&\qquad\qquad +\frac{1}{2}\int_{ B^c_{\epsilon}(0)}(f(x)-f(x-y))(Q(x,x+y)-Q(x,x-y))|y|^{-N-sp}\ dy\Bigg|\\
&\leq \frac{C_3K}{2}\int_{ B_{\delta}(0)\setminus B_{\epsilon}(0)} |y|^{2-N-sp+\alpha(p-2)}dy+ C_24K\int_{B_{\delta}^c(0)}|y|^{-N-sp}\ dy\\
&\qquad\qquad +\frac{K_2K}{2}\int_{ B_{\delta}(0)\setminus B_{\epsilon}(0)}|y|^{1+\alpha(p-2)-N-sp}\ dy\\
&\leq \frac{C_3}{2}NK_1\omega_N \int_0^{\delta} \rho^{1-sp+\alpha(p-2)}d\rho+ C_2K4N\omega_N\int_{\delta}^{\infty}\rho^{-1-sp}\ d\rho+\frac{K_2K}{2}N\omega_N\int_{0}^{\delta}\rho^{\alpha(p-2)-sp}\ d\rho\\
&= \frac{C_3KN\omega_N \delta^{2-sp+\alpha(p-2)}}{2(2-sp+\alpha(p-2))} +\frac{C_2K4}{sp}N\omega_N\delta^{-sp}+ \frac{K_2KN\omega_N\delta^{1-2s+\alpha(p-2)}}{2(1-sp+\alpha(p-2))}  =:C_4<\infty,
\end{align*}
where we have used $\alpha(p-2)>sp-1$.
 \end{proof}

\begin{lemma}\label{smpbasis}
 Let $D\subset \R^N$ be an open bounded set, $K\subset\subset \R^N\setminus D$ with $|K|>0$, $\delta\in(0,1]$, $s\in(0,1)$, and $p>1$. Moreover fix $f\in C^{2}_c(D)$ with $0\leq f\leq 1$.
\begin{enumerate}
 \item If $\frac{1}{1-s}<p\leq2$, then for any $v\in \tilde{W}^{s,p}(D)\cap L^{\infty}(\R^N)$ there is $a_0,b>0$ such that in weak sense for all $a\in(0,a_0]$
\begin{equation}\label{lowerbound}
\begin{aligned}
  (-\Delta)^s_p (v-af-\delta 1_{K})&\geq (-\Delta)^s_p v+b &&\text{ in $\supp\ f$ and }\\
  (-\Delta)^s_p v-b&\geq (-\Delta)^s_p (v+af+\delta 1_{K}) &&\text{ in $\supp\ f$.}
  \end{aligned}
\end{equation}
\item If $p\geq 2$, then for any $v\in \tilde{W}^{s,p}(D)\cap  C^{\alpha}(D)\cap L^{\infty}(\R^N)$, $\alpha\in(0,1]$ with $\alpha(p-2)>sp-1$ there is $a_0,b>0$ such that (\ref{lowerbound}) holds in weak sense for all $a\in(0,a_0]$.
\end{enumerate}
\end{lemma}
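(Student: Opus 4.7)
The plan is to combine the two previous lemmas in a direct, but careful way: Lemma \ref{wahlvondelta} gives a one-sided bound with a gain of order $\delta^{p-1}$ (for $p\geq 2$) or $\delta$ (for $p\in(1,2)$) when adding/subtracting a characteristic $\delta 1_K$, while Lemma \ref{vglvf} quantifies the (small) error introduced by subtracting a smooth perturbation $af$ with $a\in(0,1]$. Thus for $a$ sufficiently small, the gain from the nonlocal contribution of $K$ dominates the perturbation coming from $f$.

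More concretely, for the first inequality in \eqref{lowerbound} I would write $v-af-\delta 1_K=(v-af)-\delta 1_K$ and apply Lemma \ref{wahlvondelta} to the function $v-af\in\tilde{W}^{s,p}(D)$; this requires only observing that $f\in C^2_c(D)\subset W^{s,p}(\R^N)\subset\tilde{W}^{s,p}(D)$, and, in the case $p\in(\tfrac{1}{1-s},2]$, that $\|v-af\|_{L^\infty(\R^N)}\leq M:=\|v\|_{L^\infty(\R^N)}+1$ uniformly in $a\in(0,1]$, so the constant $C$ in \eqref{nonlocalchar2} can be taken uniformly. In the case $p\geq2$, I would apply Lemma \ref{wahlvondelta} part (i) which needs no $L^\infty$ bound. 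This yields in weak sense
\[
(-\Delta)^s_p(v-af-\delta 1_K)\;\geq\;(-\Delta)^s_p(v-af)+c_1\delta^{\beta},
\]
where $\beta=p-1$ for $p\geq2$ and $\beta=1$ for $p\in(1,2)$, and $c_1>0$ is independent of $a$. Then I would apply Lemma \ref{vglvf} to $v$, which gives
\[
(-\Delta)^s_p(v-af)\;\geq\;(-\Delta)^s_pv-c_2|a|^{\gamma}\qquad\text{in }\supp f,
\]
with $\gamma=p-1$ in the case $p\in(\tfrac{1}{1-s},2]$ and $\gamma=1$ in the case $p\geq2$ (using in the second case the hypothesis $v\in C^\alpha_{loc}(D)\cap L^\infty(\R^N)$ with $\alpha(p-2)>sp-1$). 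Chaining the two estimates produces
\[
(-\Delta)^s_p(v-af-\delta 1_K)\;\geq\;(-\Delta)^s_pv+c_1\delta^{\beta}-c_2a^{\gamma}.
\]
Choosing $a_0:=(c_1\delta^{\beta}/(2c_2))^{1/\gamma}\wedge 1$ and $b:=c_1\delta^{\beta}/2$, the first inequality in \eqref{lowerbound} holds for every $a\in(0,a_0]$.

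The second inequality in \eqref{lowerbound} is proved symmetrically, replacing $-af-\delta 1_K$ by $+af+\delta 1_K$ and applying the second estimates of Lemma \ref{wahlvondelta} and Lemma \ref{vglvf} (noting that $|{-a}|=|a|$ so the perturbation bound is identical). No genuinely new obstacle appears: the only points that need attention are the uniform $L^\infty$ (and, for $p\geq2$, Hölder) control of $v\pm af$, which follow immediately from the assumed regularity of $v$ and the smoothness and compact support of $f$. The essential ingredients — the nonlocal gain from $K$ and the control of the $f$-perturbation — are already packaged in Lemmas \ref{wahlvondelta} and \ref{vglvf}, so the proof reduces to an elementary algebraic choice of $a_0$ and $b$ depending on $\delta$, $\|v\|_{L^\infty}$, $f$, $K$, $D$, $s$, $p$ (and, in the second case, on $\|v\|_{C^\alpha_{loc}(D)}$).
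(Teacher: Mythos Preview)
Your proposal is correct and follows essentially the same approach as the paper: apply Lemma~\ref{wahlvondelta} to $v\mp af$ to extract the nonlocal gain from $\delta 1_K$, then Lemma~\ref{vglvf} to control the $af$-perturbation, and finally choose $a_0$ small enough so that the gain dominates. The paper writes the two cases jointly via $\min\{\delta,\delta^{p-1}\}$ and $\max\{a,a^{p-1}\}$ rather than your explicit exponents $\beta,\gamma$, and is somewhat less explicit than you about the uniformity in $a$ of the constant from Lemma~\ref{wahlvondelta} (which you correctly secure via $\|v\mp af\|_{L^\infty}\le \|v\|_{L^\infty}+1$ in the case $p\in(\tfrac{1}{1-s},2]$), but the argument is the same.
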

\begin{proof}
 Fix $v,s,p$ as stated. By Lemma \ref{wahlvondelta} we have in all cases
\[
 (-\Delta)^s_p (v-af-\delta 1_{K})\geq (-\Delta)^s_p (v-af) + C\min\{\delta,\delta^{p-1}\}
\]
for some $C>0$. Moreover, by Lemma \ref{vglvf} we have for some $\tilde{C}>0$ in weak sense
\[
  (-\Delta)^s_p (v-af) \geq (-\Delta)^s_p v- \max\{a,a^{p-1}\}\tilde{C},\quad a\in(0,1].
\]
Hence we may fix $a_0=a_0(\delta)\in(0,1]$ such that $b=- \max\{a_0,a_0^{p-1}\}\tilde{C}+C\min\{\delta,\delta^{p-1}\}>0$. This shows the first inequality in (\ref{lowerbound}). The second inequality in (\ref{lowerbound}) follows similarly.
\end{proof}

\begin{proof}[Proof of Theorem \ref{smp}]
	We start with the case $v\in L^{\infty}(\R^N)$. Assume there is $M\subset\subset\R^{N}$ with $0<|M|<|D|$ and $\delta:=\underset{M}\essinf\ (v-w)>0$. Without loss we may assume $\delta\leq 1$. Fix $K\subset\subset D\setminus M$ and let $f\in C^{2}_c(D\setminus M)$ be given with $f\equiv 1$ in $K$ and $0\leq f\leq 1$. Let $a_0,b>0$ be given by Lemma \ref{smpbasis} and fix $a\in(0,a_0]$. Let $u_a:=v-af-\delta 1_{M}$ and note that $u\in \tilde{W}^{s,p}(\supp\ f)$. Then -- assuming for 2. in addition $v\in C^{\alpha}_{loc}(D)$ for $\alpha\in(0,1]$ as stated -- we have in weak sense in $\supp\ f$ by Lemma \ref{smpbasis}
	\begin{align*}
	(-\Delta)^s_pu_a&\geq (-\Delta)^s_p v+b \geq -q(x)v^{\ast(p-1)} +b\\
	&\geq  -q(x)u_a^{\ast(p-1)} +b +q(x)\Big((v-af)^{\ast(p-1)} -v^{\ast(p-1)} \Big)\\
	&\geq -q(x)u_a^{\ast(p-1)} +b -\|q\|_{L^{\infty}(D)}\Big( v^{\ast(p-1)}-(v-a)^{\ast(p-1)} \Big).
	\end{align*}
	By \eqref{p01speziell} if $p\leq2$ or by \eqref{pgen} with $M=\|v\|_{L^{\infty}(\R^N)}$, there is $C>0$ depending only on $p$ and if $p>2$ on $M$, such that
	\[
	\Big( v^{\ast(p-1)}-(v-a)^{\ast(p-1)} \Big)\leq C \max\{a,a^{p-1}\}.
	\]
	It follows that
	\[
	b -\|q\|_{L^{\infty}(D)}\Big( v^{\ast(p-1)}-(v-a)^{\ast(p-1)} \Big)\geq b -\|q\|_{L^{\infty}(D)}C\max\{a,a^{p-1}\}.
	\]
	Hence we may choose $a_1\in(0,a_0]$ such that $ b -\|q\|_{L^{\infty}(D)}C\max\{a_1,a_1^{p-1}\}>0$. Then $u_{a_1}$ satisfies
	\[
	(-\Delta)^s_pu_{a_1} +q(x)u_{a_1}^{\ast(p-1)}\geq g \quad\text{ in $\supp\ f$}
	\] 
	and $u_{a_1}\geq w$ in $\R^N\setminus \supp \ f$. Lemma \ref{wmp} implies $u_{a_1}\geq w$ a.e. in $\supp f$, and hence $v\geq w+_{a_1}f$ in $\supp\ f$. In particular,
	\[
	v\geq w+_{a_1} \quad\text{ in $K$.}
	\]
	Since $K,f$ were chosen arbitrarily, this shows 
	\[
	\underset{K}\essinf\ (v-w)>0 \quad\text{ for all $K\subset\subset D\setminus M$.}
	\]
	Since $0<|M|<|D|$ we may fix $\tilde{M}\subset \subset D\setminus M$ with $\tilde{\delta}:=\underset{\tilde{M}}\essinf\ (v-w)>0$. Repeating the above argument now with $D\setminus \tilde{M}$ in place of $D\setminus M$  this shows the claim in case 1 and also in case 2 with $v\in  C_{loc}^{\alpha}(D)\cap L^{\infty}(\R^N)$.\\
	If $w\in C_{loc}^{\alpha}(D)\cap L^{\infty}(\R^N)$ we note that Lemma \ref{smpbasis} implies also the existence of $a_0,b>0$ such that
	\[
	(-\Delta)^s_p (w+af+\delta 1_{K})\leq (-\Delta)^s_p w-b.
	\]
	for all $a\in(0,a_0]$. Proceeding as in the first case we find $v\geq w+af$ in $\supp f$ and since $f$ was chosen arbitrarily, this finishes the proof similarly as in the first case.
\end{proof}

\begin{remark}
	We note that as for the weak maximum principle, it is also with almost the same proof possible to show: Let $D\subset \R^N$ be an open set, $q\in L^{\infty}(D)$, $q\geq 0$, $s\in(0,1)$, $p>1$, and assume $v,w\in \tilde{W}^{s,p}(D)$ satisfy in weak sense
	\[
	\begin{aligned}
	(-\Delta)^s_p v -(-\Delta)^{s}_pw &\geq -q(x)(v-w)^{\ast(p-1)} &&\text{ in $D$}\\
	v&\geq w&&\text{ in $\R^N$.}
	\end{aligned}
	\]
	If one of the following holds
	\begin{enumerate} 
		\item $\frac{1}{1-s}<p\leq2$ and $v\in L^{\infty}(\R^N)$ or $w\in L^{\infty}(\R^N)$, or
		\item  $p\geq 2$ and for some $\alpha\in(0,1]$ with $\alpha(p-2)>sp-1$ we have $v\in C_{loc}^{\alpha}(D)\cap L^{\infty}(\R^N)$ or $w\in C^{s}_{loc}(D)\cap L^{\infty}(\R^N)$,
	\end{enumerate}
	then either $v\equiv w$ a.e. in $\R^N$ or  $\underset{K}\essinf\ (v-w)>0$ for all $K\subset\subset D$.
\end{remark}

\begin{cor}[Strong maximum principle]
 Let $D\subset \R^N$ be an open set and let $f:D\times \R\to [0,\infty)$ be such that $f(x,0)=0$ for all $x\in D$ and $$\int_{K}f(x,v(x))\varphi(x)\ dx<\infty\quad\text{ for all $K\subset\subset D$ and $v,\varphi\in L^p(K)$, $v,\varphi\geq0$.}$$
 Moreover, let $v\in\tilde{W}^{s,p}(D)$ be a nonnegative function satisfying in weak sense
\[
 (-\Delta)^s_p v\geq f(x,v) \quad\text{ in $D$.}
\]
If $p>\frac{1}{1-s}$, then either $v\equiv 0$ in $\R^N$ or $\underset{K}\essinf\ v>0$ for all $K\subset\subset \Omega$.
\end{cor}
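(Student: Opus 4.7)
The idea is to apply Theorem \ref{smp} with the trivial competitor $w \equiv 0$ and the choices $q \equiv 0$, $g \equiv 0$. Since $v \geq 0 = w$ on $\R^N$ and $f(x,v(x)) \geq 0$, the assumed inequality gives, for every nonnegative $\varphi \in \cW^{s,p}_0(D)$ with compact support,
\begin{equation*}
\langle v, \varphi \rangle_{s,p} \;\geq\; \int_D f(x,v(x)) \varphi(x)\, dx \;\geq\; 0 \;=\; \int_D g\varphi\, dx,
\end{equation*}
so $v$ is a supersolution of $(-\Delta)^s_p u + q(x)|u|^{p-2}u = g$ in $D$; trivially, $w \equiv 0$ is a solution (hence subsolution) of the same equation, and $w \in \tilde{W}^{s,p}(D)$.

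To invoke Theorem \ref{smp}, we exploit the fact that $w \equiv 0$ automatically satisfies $w \in L^\infty(\R^N) \cap C^\alpha_{loc}(D)$ for \emph{every} $\alpha \in (0,1]$. If $\frac{1}{1-s} < p \leq 2$, case 1 of Theorem \ref{smp} applies directly since $w \in L^\infty(\R^N)$. If $p > 2$, the hypothesis $p > \frac{1}{1-s}$ rearranges to $p(1-s) > 1$, i.e.\ $sp - 1 < p - 2$, so the interval $\bigl(\tfrac{sp-1}{p-2}, 1\bigr]$ is nonempty; choosing any $\alpha$ in this interval yields $\alpha(p-2) > sp - 1$, and case 2 of Theorem \ref{smp} applies with this $\alpha$ since $w \in C^\alpha_{loc}(D) \cap L^\infty(\R^N)$. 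In either case Theorem \ref{smp} gives the dichotomy that either $v = w = 0$ a.e.\ in $\R^N$, or
\[
\underset{K}\essinf\ v \;=\; \underset{K}\essinf\ (v-w) \;>\; 0 \quad\text{for all $K \subset\subset D$,}
\]
which is the claimed conclusion.

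The only nontrivial step is the algebraic observation that the corollary's bound $p > \frac{1}{1-s}$ is exactly what guarantees the Hölder exponent requirement $\alpha(p-2) > sp - 1$ of Theorem \ref{smp} can be met by some $\alpha \in (0,1]$ when $p > 2$; aside from this, the proof is a direct reduction, since $w \equiv 0$ trivially fulfils every regularity hypothesis the strong comparison principle might demand and the supersolution inequality for $v$ follows at once from the nonnegativity of $f$.
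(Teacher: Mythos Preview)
Your proof is correct and follows the same approach as the paper's: compare $v$ with $w\equiv 0$ and apply Theorem \ref{smp}. The paper's proof is a one-line invocation of Theorem \ref{smp}, while you spell out why its hypotheses are met, in particular the algebraic check that $p>\frac{1}{1-s}$ rearranges to $sp-1<p-2$ so that a suitable H\"older exponent $\alpha\in(0,1]$ exists when $p>2$; this is a useful elaboration but not a different route.
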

\begin{proof}
 Since $0\in W^{s,p}(\R^N)\cap C_c^{0,1}(\R^N)$ satisfies $(-\Delta)^s_p 0=0\leq f(x,v)\leq (-\Delta)^s_p v$ an application of Theorem \ref{smp} proves the claim.
\end{proof}

\section{Starshaped superlevel sets}\label{applications}

As in \cite{JKS17} we use the following observation.

\begin{lemma}[See Lemma 3.1, \cite{JKS17}]\label{lmstar}
	Let  $u:\R^N\to\R$ such that $M=\max_{\R^N}u=u(0)$. Then the superlevel sets $U(\ell)$, $\ell\in\R$, of $u$ are all (strictly) starshaped if and only if $u(tx)\leq u(x)$ ($u(tx)<u(x)$) for every $x\in\rn$ and every $t> 1$.
\end{lemma}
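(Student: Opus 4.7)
The plan is to prove both equivalences (non-strict and strict) in parallel, as they rest on the same change-of-variables trick between $t \geq 1$ and $s = 1/t \in (0,1]$. The underlying observation is that $U(\ell)$ starshaped means $y \in U(\ell) \Rightarrow sy \in U(\ell)$ for $s \in [0,1]$; setting $x = sy$ and $t = 1/s$, this rewrites as $u(tx) \geq \ell \Rightarrow u(x) \geq \ell$, so asking this to hold for every $\ell \in \R$ is precisely the pointwise monotonicity $u(tx) \leq u(x)$ along rays.

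For the non-strict $(\Leftarrow)$: fix $\ell \in \R$, take $x \in U(\ell)$ and $s \in [0,1]$, and verify $sx \in U(\ell)$. If $s = 0$ use $u(0) = M \geq \ell$; for $s \in (0,1]$ apply the hypothesis with $t = 1/s \geq 1$ to the point $sx$, obtaining $u(x) = u(t(sx)) \leq u(sx)$, whence $u(sx) \geq \ell$. For $(\Rightarrow)$: given $x \in \R^N$ and $t > 1$, set $\ell = u(tx)$; then $tx \in U(\ell)$, and starshapedness of $U(\ell)$ applied with scaling factor $1/t \in (0,1)$ yields $x = (1/t)(tx) \in U(\ell)$, i.e., $u(x) \geq u(tx)$.

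The strict case follows the same template but requires a bit more care because the definition of strict starshapedness demands $0$ lies in the interior of the set and each ray from $0$ hits the boundary exactly once; accordingly the strict inequality $u(tx) < u(x)$ is to be understood for $x \neq 0$. For $(\Leftarrow)$, strict monotonicity of $t \mapsto u(tx)$ along every ray through $0$ (combined with $u(0) = M$ being the maximum) forces the set $\{t > 0 : u(tx) \geq \ell\}$ to be an interval of the form $(0, T(x)]$, so the ray $\{tx : t > 0\}$ meets $\partial U(\ell)$ at the single point $T(x)\, x$; moreover the hypothesis prevents flat pieces so $0$ sits in the interior of $U(\ell)$ whenever $\ell < M$. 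For $(\Rightarrow)$, if $u(tx) = u(x)$ for some $t > 1$ and $x \neq 0$, then $x$ and $tx$ are two distinct points of $\partial U(u(x))$ on the same ray, contradicting strictness.

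The only real obstacle is handling the strict-starshaped direction cleanly: one needs to translate "exactly one boundary crossing per ray" into strict monotonicity of $u$ along rays, which I would do by invoking continuity of $t \mapsto u(tx)$ on the interior where $U(\ell)$ is open and using the max condition $u(0) = M$ to rule out degenerate intervals. The rest of the argument is the same substitution $s \leftrightarrow 1/t$ used in the non-strict case.
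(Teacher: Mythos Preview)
The paper does not prove this lemma; it is quoted from \cite{JKS17} without argument, so there is nothing to compare your approach against. Your treatment of the non-strict equivalence is correct and is exactly the standard substitution $s\leftrightarrow 1/t$ one expects.

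For the strict equivalence, your $(\Rightarrow)$ direction has a gap. You write: ``if $u(tx)=u(x)$ for some $t>1$ and $x\neq0$, then $x$ and $tx$ are two distinct points of $\partial U(u(x))$ on the same ray.'' But from $u(x)=u(tx)=\ell$ you only know $x,tx\in U(\ell)$; you have not shown they lie on the \emph{boundary}. Indeed, using the non-strict part you get $u(t'x)=\ell$ for every $t'\in[1,t]$, so an entire segment sits in the level set $\{u=\ell\}$, and whether its endpoints are boundary points of $U(\ell)$ depends on what happens for $t'>t$ and on the topology of $U(\ell)$ near that segment --- neither of which you control without continuity. You are right that continuity is what closes this (and you flag it), but note that the lemma as stated does not assume $u$ continuous; in the paper it is only ever applied to continuous $u$, so implicitly that hypothesis is in force. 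If you want a clean argument, assume $u$ continuous, use the non-strict part to get monotonicity along rays, and then argue that a plateau $u\equiv\ell$ on $[x,tx]$ forces the ray to meet $\partial U(\ell)$ either at more than one point or along a whole segment, contradicting strict starshapedness.

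A second small point: in your strict $(\Leftarrow)$ direction you assert $0$ lies in the interior of $U(\ell)$ ``whenever $\ell<M$''. This is the right restriction, but the lemma as written says ``for all $\ell\in\R$'', which cannot hold at $\ell=M$ (there $U(M)$ reduces to the maximum set and strict starshapedness would force $u\equiv M$ near $0$, contradicting the strict decrease along rays). This is a looseness in the lemma's statement rather than in your proof, but it is worth noting.
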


\begin{thm}\label{res2}
	Let $D=D_0\setminus \overline{D}_1$ with $D_0, D_1\subset \R^N$ open bounded sets such that $0\in D_1$ and $\overline{D}_1\subset D_0$. Moreover, let $b_0,b_1\in L^{\infty}(\R^N)\cap \tilde{W}^{s,p}(\R^N)$ such that $b_0\equiv 0$ on $\partial D_0$ and $b_1\equiv 1$ on $\partial D_1$ and let $q\in L^{\infty}(D)$ be a nonnegative function, $g\in L^{p'}(D)$ with $p'=\frac{p}{p-1}$ such that both functions satisfy (A2), i.e.
	\begin{enumerate}
		\item For all $t\geq 1$ and $x\in \R^N$ such that $tx\in D_0\setminus \overline{D}_1$ we have $t^{sp}q(tx)\geq q(x)$ and $t^{sp}g(tx)\geq g(x)$.
	\end{enumerate}
	 Let $u\in \tilde{W}^{s,p}(\R^N)$ be a continuous solution of
	\begin{equation}\label{main-prob4}
	\left\{\begin{aligned}
	(-\Delta)^s_pu+q(x)|u|^{p-2}u&=-g&&\text{ in $D$;}\\
	u&=b_0&&\text{ on $D_0^c$;}\\
	u&=b_1&& \text{ on $D_1$,}
	\end{aligned}
	\right.
	\end{equation}
	such that $0\leq u\leq 1$ in $D$. If $b_0|_{D_0^c}$, $b_1|_{D_1}$ have starshaped superlevel sets, then the following holds:
	\begin{enumerate}
			\item If $D_0$ and $D_1$ are starshaped sets, then the superlevel sets $U(\ell)$ of $u$ are  starshaped for $\ell\in (0,1)$.
			\item If $D_0$ and $D_1$ are strictly starshaped sets, $0<u<1$ in $D$ and 
		\begin{enumerate}
			\item $\frac{1}{1-s}<p\leq 2$ \underline{or}
			\item $p\geq 2$ and $u\in C^{\alpha}_{loc}(D)$ for some $\alpha\in(0,1]$ with $\alpha(p-2)>sp-1$,
		\end{enumerate}		
			then the superlevel sets $U(\ell)$ of $u$ are strictly starshaped for $\ell\in (0,1)$.
		\end{enumerate}	
\end{thm}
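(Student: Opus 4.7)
The plan is to reduce both assertions to the pointwise criterion of Lemma \ref{lmstar}: if $u(tx)\leq u(x)$ (respectively, $u(tx)<u(x)$) for every $x\in\R^N$ and every $t>1$, then the superlevel sets of $u$ are (respectively, strictly) starshaped, provided $u$ attains its maximum at $0$. This latter fact follows because $u(0)=b_1(0)\geq 1$ (a consequence of the starshaped superlevel sets of $b_1|_{D_1}$ together with $b_1\equiv 1$ on $\partial D_1$, via the ray argument described below), while $u\leq 1$ in $D$ and $u=b_0\leq 0$ outside $D_0$. The core step is to compare $u$ with its dilate $u_t(x):=u(tx)$ on the bounded open set $\Omega_t:=D\cap t^{-1}D=t^{-1}D_0\setminus\overline{D}_1$. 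Lemma \ref{scaling} yields
\[
(-\Delta)^s_p u_t + t^{sp}q(tx)\,u_t^{\ast(p-1)} = -t^{sp}g(tx)\qquad\text{in }t^{-1}D,
\]
and since $u_t\geq 0$ on $\Omega_t$ and (A2) implies $q(x)-t^{sp}q(tx)\leq 0$ together with $g(x)-t^{sp}g(tx)\leq 0$, the rewriting
\[
(-\Delta)^s_p u_t + q(x)\,u_t^{\ast(p-1)} = [q(x)-t^{sp}q(tx)]\,u_t^{\ast(p-1)} - t^{sp}g(tx)\leq -g(x)
\]
shows that $u_t$ is a subsolution of $(-\Delta)^s_p w+q(x)|w|^{p-2}w=-g$ on $\Omega_t$, of which $u$ is a supersolution.

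Next, I would verify the boundary inequality $u\geq u_t$ on $\R^N\setminus\Omega_t$ needed to apply Lemma \ref{wmp}. The starshapedness of $D_0$ and $\overline{D}_1$ gives the implications $x\in D_0^c\Rightarrow tx\in D_0^c$ and $tx\in\overline{D}_1\Rightarrow x\in\overline{D}_1$ for every $t\geq 1$. Combined with the prescribed boundary values $b_0\equiv 0$ on $\partial D_0$ and $b_1\equiv 1$ on $\partial D_1$, the starshaped superlevel sets of $b_0|_{D_0^c}$ and $b_1|_{D_1}$ yield the pointwise bounds $b_0\leq 0$ on $D_0^c$ and $b_1\geq 1$ on $\overline{D}_1$: any $y\in D_0^c$ can be written as $y=ty_0$ with $y_0\in\partial D_0$ and $t\geq 1$, so monotonicity along the ray gives $b_0(y)\leq b_0(y_0)=0$, and symmetrically for $b_1$. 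A case analysis of $x\in\R^N\setminus\Omega_t$ then gives $u(x)\geq u_t(x)$, treating the subcases $x\in D_0^c$, $x\in\overline{D}_1$ with $tx$ in each of $D_1,D,D_0^c$ (using the bound on $b_1|_{D_1}$, the inequality $b_1(x)\geq 1\geq u(tx)$, and $b_0(tx)\leq 0\leq b_1(x)$, respectively), and $x\in D$ with $tx\in D_0^c$ (where $tx\in\overline{D}_1$ is excluded by the implication above). Lemma \ref{wmp} now yields $u\geq u_t$ a.e.\ on $\R^N$, and continuity of $u$ upgrades this to a pointwise inequality; combined with the first paragraph, this proves (1).

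For (2) I would apply the strong comparison principle, Theorem \ref{smp}, on $\Omega_t$: either $u\equiv u_t$ a.e.\ on $\R^N$, or $\essinf_K(u-u_t)>0$ for every compact $K\subset\subset\Omega_t$. The equality alternative is ruled out using the strict starshapedness of $D_1$: pick any $x_0\in\partial D_1$, so that $u(x_0)=1$; strict starshapedness together with $t>1$ places $tx_0\in D$, where $0<u<1$, forcing $u(x_0)>u_t(x_0)$. Hence $u(x)>u(tx)$ for every $x\in\Omega_t$, and a second application of Lemma \ref{lmstar} concludes (2). The main obstacle I anticipate is the boundary case analysis in the second paragraph, especially the extraction of the pointwise bounds $b_0\leq 0$ on $D_0^c$ and $b_1\geq 1$ on $\overline{D}_1$ from the abstract assumption of starshaped superlevel sets of the restrictions together with the boundary values; this is where the starshapedness of the domains $D_0,D_1$ enters in an essential way.
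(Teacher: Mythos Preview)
Your proposal is correct and follows essentially the same route as the paper: define $u_t(x)=u(tx)$, use Lemma~\ref{scaling} together with (A2) to see that $u_t$ is a subsolution on $\Omega_t=t^{-1}D_0\setminus\overline{D}_1$ (the paper's $D_t$), verify $u\ge u_t$ outside $\Omega_t$ via the starshapedness of $D_0,D_1$ and of the superlevel sets of $b_0,b_1$, and then invoke Lemma~\ref{wmp} for part (1) and Theorem~\ref{smp} for part (2). Your boundary case analysis is more explicit than the paper's, and your exclusion of the alternative $u\equiv u_t$ via a point on $\partial D_1$ is a mild variant of the paper's argument, which simply observes that $u\equiv u_t$ contradicts $0<u<1$ in $D$; both are valid.
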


\begin{proof}
	We proceed as in the proof of \cite[Theorem 1.8]{JKS17}. Let $D_0$, $D_1$, $b_0$, $b_1$, and $u$ be given as for 1. Note that by assumption it follows that $u\in L^{\infty}(\R^N)$. Denote for any $t>1$ and any function $v:\R^N\to \R$ 
	$$v_t(x)=v(tx)\quad x\in\R^N\,.$$
	Thanks to Lemma \ref{lmstar}, the starshapedness of the level sets of $u$ is equivalent to  
	\begin{equation}\label{ut<0}
	u\geq u_t\,\,\text{ in }\R^N\,\,\text{ for }\,\,t>1\,.
	\end{equation}
	Observe that since the superlevel sets of $b_0$ and $b_1$ are starshaped and $0\leq u\leq 1$ in $D$, we have $u\geq u_t$ in $\R^N\setminus D_0$ and in $t^{-1}\overline{D}_1$ and
	\begin{equation}\label{utboundary}
	u(x)\geq  u_t(x)\quad\text{for }x\in D_0\setminus (t^{-1} D_0)\,\text{ and }\, x\in \overline{D}_1\setminus(t^{-1}\overline{D}_1)\,.
	\end{equation}
	
	Put $D_t=(t^{-1}D_0)\setminus\overline{D}_1$. It remains to investigate $u_t$ in $D_t$. Note that since $D_0$ is bounded, $D_t$ is empty for $t$ large enough. By Lemma \ref{scaling} we get (in weak sense) 
	\[
	(-\Delta)^{s}_pu_t =t^{sp} \big[(-\Delta)^{s}_p u\big]_t=-t^{sp}q(tx)u_t^{\ast(p-1)}-t^{sp}g(tx)\leq -q(x)u_t^{\ast(p-1)}+g(x)  \qquad\text{ in $D_t$,}
	\]
	where we used that $u_t\geq0$ in $\R^N$. Hence Lemma \ref{wmp} implies $u\geq u_t$ in $\R^N$. This proves 1.\\
	If in addition $u$, $D_0$ and $D_1$ satisfy the assumptions of 2., then observe that with the same argument as above Theorem \ref{smp} yields either $u_t\equiv u$ in $\R^N$ or $u>u_t$ in $D_t$. Since $u\equiv u_t$ in $\R^N$ is not possible for $t>1$ due to the strict inequality $0<u<1$ in $D$, we must have $u>u_t$ in $D_t$ for all $t>1$. This proves 2.
\end{proof}

\begin{proof}[Proof of Theorem \ref{res1}]
If $D_0$ and $D_1$ are starshaped, then the result follows immediately from Theorem \ref{res2}.1. Hence let $D_0$ and $D_1$ be strictly starshaped. Since the functions $v\equiv 1$ satisfies $(-\Delta)^{s}_pv+q(x)|v|^{p-2}v=q(x)\geq 0$ in $D$ and $v\geq u$, $v\not \equiv u$ in $D^c$, Theorem \ref{smp} implies $u<1$ in $D$. Similarly, the function $w\equiv 0$ satisfies $(-\Delta)^{s}_pw+q(x)|w|^{p-2}w=0$ in $D$ and $u\geq w$, $u\not\equiv w$ in $D^c$, Theorem \ref{smp} implies $u>0$ in $D$. Hence the claim follows from Theorem \ref{res2}.2 with $b_0\equiv0$ and $b_1\equiv 1$.
\end{proof}

\begin{proof}[Proof of Theorem \ref{res3}]
Let $q,u$ be as stated and assume $u\not\equiv 0$ on $\R^N$. Then by the strong comparison principle we must have $u>0$ on $\R^N_+$ since $0$ is solution of $(-\Delta)^{s}_p u+q(x)u^{\ast(p-1)}=0$ and $u\geq0$. For $t\geq0$ denote $u_t(x):=u(x+te_1)$ for $x\in \R^N$ and $H_t:=\{x\in \R^N_+\;:\; x_1>t\}$. Then we have in weak sense for all $t\geq0$
\[
(-\Delta)^{s}_pu+q(x)u^{\ast(p-1)} \geq (-\Delta)^s_pu_t(x)+q(x)u_t^{\ast(p-1)} \qquad \text{ on $H_t$}
\]
by the assumptions on $q$. Hence for all $t\geq0$, since $\lim\limits_{|x|\to \infty}(u(x)-u_t(x))=0$, Lemma \ref{wmp} implies $u_t\leq u$ on $\Omega_t$. We claim
\begin{equation}\label{claim1}
\text{ For all $t>0$ we have $u_t<u$ on $H_t$}
\end{equation}
Fix $t>0$ and assume by contradiction that $u_t\equiv u$ on $\R^N$. But then $u\equiv 0$ on $\{x\in \R^N\;:\; 0<x_1<t\}$, which is a contradiction to the fact that $u>0$ on $\R^N_+$. Hence $u_t\not \equiv u$ on $\R^N$ and Theorem \ref{smp} implies \eqref{claim1} since $t>0$ is arbitrary.\\
Note that \eqref{claim1} implies that $u$ is strictly decreasing in $x_1$, but since $u\geq 0$, $u=0$ on $(\R^N_+)^c$, and $u$ is continuous, this is a contradiction and hence we must have $u\equiv 0$ on $\R^N$ as claimed.
\end{proof}

\bibliographystyle{amsplain}

\end{document}